\documentclass[12pt,reqno]{amsart}

\usepackage[T1]{fontenc}
\usepackage{lmodern}
\usepackage{tikz-cd}

\usepackage{amsmath,amssymb,amsthm,mathrsfs,mathtools}
\usepackage{marvosym}

\usepackage{xcolor}
\usepackage{tikz}
\usetikzlibrary{arrows.meta,positioning}

\usepackage[a4paper,margin=1.25in]{geometry}
\usepackage[
    colorlinks,
    allcolors=black,
    pdfauthor={},
    pdftitle={}
]{hyperref}
\usepackage[nameinlink]{cleveref}

\theoremstyle{plain}
\newtheorem{theorem}{Theorem}[section]
\newtheorem{proposition}[theorem]{Proposition}
\newtheorem{corollary}[theorem]{Corollary}
\newtheorem{lemma}[theorem]{Lemma}

\theoremstyle{definition}

\newtheorem{question}[theorem]{Question}

\DeclareSymbolFont{bbold}{U}{bbold}{m}{n}
\DeclareSymbolFontAlphabet{\mathbbold}{bbold}

\DeclareMathOperator{\cone}{cone}
\DeclareMathOperator{\rank}{rank}
\DeclareMathOperator{\Span}{span}

\DeclareMathOperator{\ran}{ran}

\begin{document}

 \title[Nonnegative rank]{On the nonnegative rank of positive operators}

\author{Roman Drnov\v sek}
\address{Faculty of Mathematics and Physics, University of Ljubljana,
  Jadranska 19, 1000 Ljubljana, Slovenia \ \ and \ \ \
  Institute of Mathematics, Physics, and Mechanics, Jadranska 19, 1000 Ljubljana, Slovenia}
\email{roman.drnovsek@fmf.uni-lj.si}

\author{Marko Kandi\'c$^\ast$}
\thanks{$^\ast$ Corresponding author \Letter\ \href{mailto:marko.kandic@fmf.uni-lj.si}{marko.kandic@fmf.uni-lj.si}}
\address{Faculty of Mathematics and Physics, University of Ljubljana,
  Jadranska 19, 1000 Ljubljana, Slovenia \ \ and \ \ \
  Institute of Mathematics, Physics, and Mechanics, Jadranska 19, 1000 Ljubljana, Slovenia}
\email{marko.kandic@fmf.uni-lj.si}

\keywords{Ordered vector spaces, positive operators, nonnegative rank, Yudin cone}
\subjclass[2010]{Primary: 47B65, 46A40}

\date{\today}

\begin{abstract}
In this paper we introduce the concept of a nonnegative rank of a positive operator $T\colon X\to Y$ between ordered vector spaces. In the case of nonnegative matrices, our definition agrees with the standard definition of a nonnegative rank. Under some natural and mild assumptions on the cone $Y_+$, we prove that the nonnegative rank and the rank agree whenever the rank is at most two. This can be considered as the infinite-dimensional version of \cite[Theorem 4.1]{CR93}. We also provide an example of a positive rank-three operator on the Banach lattice $C[0,1]$ with an infinite nonnegative rank.
\end{abstract}

\maketitle

\section{Introduction and preliminaries}

Let $T$ be a nonnegative $m\times n$ matrix. Gregory and Pullman introduced in \cite{GP83} the \emph{nonnegative rank} $\rank^+(T)$ of $T$ as the smallest nonnegative integer $k$ such that $T$ can be expressed as a sum of $k$ rank-one nonnegative matrices. Equivalently, $k$ is the nonnegative rank of $T$ if and only if there exist nonnegative matrices $L \in \mathbb{R}^{m \times k}$ and $R \in \mathbb{R}^{k \times n}$ such that $T = LR$, where $k$ is minimal with this property (see, for example, \cite[Corollary 2.2]{CR93}). 

If $\rank(T)\leq 2$, then \cite[Theorem 4.1]{CR93} implies that $\rank^+(T)=\rank(T)$. In contrast, for matrices of rank three, the nonnegative rank can be strictly larger. Indeed, Robbins (see \cite{CR93}) provided an example of a nonnegative $4\times 4$ matrix of 
rank three
$$
T=
\begin{pmatrix}
1 & 1 & 0 & 0 \\
1 & 0 & 1 & 0 \\
0 & 1 & 0 & 1 \\
0 & 0 & 1 & 1
\end{pmatrix}
$$
for which $\rank^+(T)=4$. This naturally raises the question of how large the nonnegative rank of a matrix of rank three can be. A special case of \cite[Proposition 3.1]{BL09} shows that it can, in fact, be arbitrarily large. More precisely, for any $k \geq 3$ there exist $n \in \mathbb{N}$ and an $n\times n$ nonnegative matrix $T$ such that $\rank^+(T)=k$ and $\rank(T)=3$. Moreover, an inspection of the proof reveals that one may take any $n \geq \binom{k}{\lfloor k/2 \rfloor}$.

It should be clear that the nonnegative rank of a nonnegative $m\times n$ matrix is at most $\min\{m,n\}$. 
In certain special cases, however, this upper bound can be improved. In particular, Shitov proved in \cite[Theorem 3.2]{Shi14} that every nonnegative $m\times n$ matrix of rank three has nonnegative rank at most $\lceil 6\min\{m,n\}/7\rceil$. More recently, Shitov showed in \cite[Corollary 3 and Theorem 5]{Shi26} that if a nonnegative matrix of rank three has at most $n\geq 3$ rows, then its nonnegative rank is bounded above by $147n^{2/3}$.

The natural notion of the nonnegative rank of a finite-rank positive operator between ordered vector spaces was introduced by the authors in \cite{DK26}. They investigated nonnegative matrices and positive operators on $\ell^p$ and $L^p[0,1]$ ($1 \leq p < \infty$) that can be represented as commutators of square-zero nonnegative matrices and positive operators, respectively. In the present paper, we initiate a systematic study of the concept of the nonnegative rank. In \Cref{sec:nonnegative rank intro} we recall the general definition of the nonnegative rank of a positive finite rank operator between ordered vector spaces and derive many natural generalizations of results from \cite{CR93} that hold in the finite-dimensional case. Under some natural and mild assumptions on the cone $Y_+$, we prove in \Cref{sec:rank and nonnegative rank agree} that the nonnegative rank and the rank agree whenever the rank is at most two. This can be considered as the infinite-dimensional version of \cite[Theorem 4.1]{CR93}. In \Cref{sec:example} we provide an example of a positive rank three operator $\mathscr S$ with infinite nonnegative rank on the Banach lattices $C[0,2\pi]$ and $L^p[0,2\pi]$ $(1\leq p\leq \infty)$. The operator is the integral operator defined by 
$$(\mathscr S f)(s)=\int_0^{2\pi} f(t)(1+\cos(s-t))\,dt . $$
In \Cref{sec:S-range cone} we study the range cone $\ran \mathscr S \cap C[0,2\pi]_+$ and the cone  $\mathscr S (C[0,2\pi]_+)$. We prove that the cone $\mathscr S (C[0,2\pi]_+)$ is norm dense in the range cone $\ran \mathscr S \cap C[0,2\pi]_+$ which is order and topologically isomorphic to the ice cream cone 
$$\{(a,b,c)\in \mathbb R^3:\; c\geq \sqrt{a^2+b^2}\}.$$
 
In the remaining part of this section we establish some notation and terminology needed throughout the text. Let $(\Omega, \Sigma, \mu)$ be a measure space, where $\Omega$ is a set, $\Sigma$
is a $\sigma$-algebra of subsets of $\Omega$ and $\mu$ a $\sigma$-finite positive measure on $\Sigma$.
By $L^0(\Omega, \Sigma, \mu)$ we denote the vector lattice of all equivalence classes of measurable real functions on the set $\Omega$. 
By \cite[Theorem 1.80]{AA02}, this is a Dedekind complete vector lattice with the countable sup property.
Therefore, ideals of $L^0(\Omega, \Sigma, \mu)$ are also Dedekind complete vector lattices. In particular, the classical spaces $L^p(\Omega, \Sigma, \mu)$ ($1 \le p \le \infty$) are all Dedekind complete. 

Let $X$ be an order ideal in $L^0(\Omega, \Sigma, \mu)$. 
An operator $T$ on $X$ is said to be an \emph{integral operator} if there exists a measurable real function $k_T$ on 
$\Omega \times \Omega$ (called the \emph{kernel} of the operator) such that for all $f \in X$ the function $t \mapsto k_T(s, t) f(t)$ belongs to 
$L^1(\Omega, \Sigma, \mu)$ for almost all $s \in \Omega$ and the equality 
$$ (Tf)(s) = \int_{\Omega} k_T(s, t) f(t) d\mu(t) $$
holds for almost all $s \in \Omega$.

If $X$ is an ordered vector space, then the set $X_+ = \{ x \in X: x \geq 0\}$ is referred to a \emph{positive cone} of $X$.
An ordered vector space $X$ has a \emph{generating cone} if $X = X_+ - X_+$, i.e., every vector can be written as a difference of two positive vectors. 
The algebraic dual of a vector space $X$ is denoted by $X^\prime$.
If $T$ is an operator on a vector space $X$, then $T^\prime$ denotes the adjoint operator on $X^\prime$.
If $X$ is a topological vector space, then $X^*$ denotes the topological dual of $X$. 
If $T$ is an operator on a topological vector space $X$, then $T^*$ denotes the adjoint operator on $X^*$.
Let $y \in X$ and $\varphi \in X^\prime$, where $X$ is a vector space. The rank-one operator $T = y \otimes \varphi$ on $X$ is defined by 
$T x = \varphi(x) y$.

Let $X$ be a vector space and let $C$ be a cone in $X$. The cone $C$ is \emph{generated} by a set $S$ if every vector $x\in C$ can be written in the form 
$x=\lambda_1 s_1+\cdots+\lambda_n s_n$ for some $n\in \mathbb N$, nonnegative real numbers $\lambda_1,\ldots,\lambda_n$ and some vectors $s_1,\ldots,s_n \in S$. Clearly, we have $S\subseteq C$ and we write $C=\cone(S)$. If $S$ is finite, then $C$ is a \emph{finitely generated cone}. 
If $C$ is generated by a Hamel basis of $X$, it is called a \emph{Yudin cone}.  For the terminology
and details not explained here we refer the reader to \cite{LZ71}, \cite{AB06}, \cite{AA02} and \cite{AT}.

\section{Basic properties of the nonnegative rank of positive operators}\label{sec:nonnegative rank intro}

A positive operator $T\colon X\to Y$ between ordered vector spaces is said to have a \emph{finite nonnegative rank} if there exists a finite-dimensional Archimedean vector lattice $Z$ and positive operators $L\colon Z\to Y$ and $R\colon X\to Z$ such that $T= LR$. 

\[
\begin{tikzpicture}[>=Stealth, node distance=2.5cm, on grid]
  \node (X) at (0,0) {$X$};
  \node (Y) at (4,0) {$Y$};
  \node (Z) at (2,-2) {$Z$};

  \draw[->] (X) -- (Y) node[midway, above] {$T$};
  \draw[->] (X) -- (Z) node[midway, left, xshift=-4pt, yshift=-2pt] {$R$};
  \draw[->] (Z) -- (Y) node[midway, right, xshift=4pt, yshift=-2pt] {$L$};
\end{tikzpicture}
\]

If $T$ has a finite nonnegative rank, the minimal dimension $\dim Z$ through which $T$ admits a factorization with positive factors $R$ and $L$ is denoted by $\rank^+(T)$, and is called the \emph{nonnegative rank} of $T$. We define $\rank^+(T)=\infty$ if $T$ does not admit a factorization through any finite-dimensional Archimedean vector lattice via positive operators. Since every nonzero finite-dimensional Archimedean vector lattice is lattice isomorphic to $\mathbb R^k$ for some $k\in \mathbb N_0$ ordered coordinatewise, in the definition of a nonnegative rank we can replace $Z$ with $\mathbb R^{\dim Z}$. It is worthwhile noting that the definition of a nonnegative rank naturally extends the definition of a nonnegative rank of a nonnegative $m\times n$ matrix since a nonnegative $m\times n$ matrix can be considered as a positive operator from $\mathbb R^n$ to $\mathbb R^m$ both with standard coordinatewise ordering.   

A trivial yet important observation is that every positive operator with a finite nonnegative rank is a finite rank operator. Indeed, the range $\ran T$ is contained in the finite-dimensional vector space $\ran L$. Moreover, we have 
\begin{equation}\label{rank - nonnegative rank inequality}
\rank(T)\leq \rank^+(T).
\end{equation} 

\begin{lemma}\label{tensor_representation}
    Let $T\colon X\to Y$ be a nonzero positive operator between ordered vector spaces. 
    Then $T$ has a finite nonnegative rank if and only if there exists $k\in \mathbb N$, positive vectors $y_1,\ldots,y_k \in Y_+$ and positive linear functionals $\varphi_1,\ldots,\varphi_k\in X'$ such that 
    \begin{equation}\label{nonnegative_rank_tensors}
     T=\sum_{j=1}^k y_j\otimes \varphi_j.
    \end{equation}
    If $T$ has a finite nonnegative rank, then $\rank^+(T)$ equals to the minimal possible $k$ in \eqref{nonnegative_rank_tensors}.
\end{lemma}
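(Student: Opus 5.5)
The plan is to translate directly between factorizations $T=LR$ through $\mathbb R^k$ and tensor representations \eqref{nonnegative_rank_tensors}, checking that each translation preserves $k$. As noted after the definition, any finite-dimensional Archimedean vector lattice $Z$ may be replaced by $\mathbb R^{\dim Z}$ with the coordinatewise order. So throughout I will work with $Z=\mathbb R^k$, its standard basis $e_1,\ldots,e_k$, and the coordinate functionals $e_1^*,\ldots,e_k^*$. These satisfy $e_j\ge 0$ and $e_j^*\ge 0$, and a vector of $\mathbb R^k$ is positive exactly when all its coordinates are nonnegative.

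First, suppose $T=LR$ with $R\colon X\to\mathbb R^k$ and $L\colon\mathbb R^k\to Y$ positive. Put $\varphi_j=e_j^*\circ R\in X'$ and $y_j=Le_j\in Y$. For $x\ge 0$ we have $Rx\ge 0$, so $\varphi_j(x)\ge 0$; thus each $\varphi_j$ is a positive functional. Since $e_j\ge 0$ and $L$ is positive, each $y_j\in Y_+$. For every $x\in X$,
\[
Tx=L\Big(\sum_{j=1}^k \varphi_j(x)e_j\Big)=\sum_{j=1}^k \varphi_j(x)y_j,
\]
which gives \eqref{nonnegative_rank_tensors} with the same $k$. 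If $k=0$ here, then $T=0$, contradicting the hypothesis; so $T\neq 0$ forces $k\ge 1$, and the condition $k\in\mathbb N$ is consistent.

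Conversely, suppose \eqref{nonnegative_rank_tensors} holds. Define $R\colon X\to\mathbb R^k$ by $Rx=(\varphi_1(x),\ldots,\varphi_k(x))$ and $L\colon\mathbb R^k\to Y$ by $L(\lambda_1,\ldots,\lambda_k)=\sum_j\lambda_j y_j$. Then $R$ is positive because each $\varphi_j$ is positive. $L$ is positive because it maps vectors with nonnegative coordinates to nonnegative combinations of the $y_j\in Y_+$. Finally $LR=\sum_j y_j\otimes\varphi_j=T$, so $T$ factors through $\mathbb R^k$.

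Together the two constructions show that the set of admissible $\dim Z$ in the definition of $\rank^+$ coincides with the set of admissible $k$ in \eqref{nonnegative_rank_tensors}. Hence the two minima agree, which gives $\rank^+(T)$ as the minimal $k$. No step is a real obstacle. The only points needing care are the reduction of $Z$ to $\mathbb R^k$ with the coordinatewise order, which the paper already states, and the positivity of the coordinate functionals; both are immediate.
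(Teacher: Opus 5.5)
Your proposal is correct and follows essentially the paper's argument. In one direction both set $y_j=Le_j$ and take $\varphi_j$ to be the coordinate functionals of $R$; in the other both set $Rx=(\varphi_1(x),\ldots,\varphi_k(x))$ and $Le_j=y_j$. Phrasing the conclusion as equality of the two sets of admissible $k$, rather than as the paper's two inequalities starting from a minimal factorization, is only a cosmetic difference.
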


\begin{proof}
    Suppose first that $T$ has a finite nonnegative rank $r:=\rank^+(T)$.  There exist positive operators $R\colon X\to \mathbb R^r$ and $L\colon \mathbb R^r\to Y$ such that $T=LR$. Let $\{e_1,\ldots,e_r\}$ be the set of standard basis vectors of $\mathbb R^r$. 
    For each $1\leq j\leq r$ denote  $y_j=Le_j$.   
    Pick any $x\in X$. Then there exist real numbers $\varphi_1(x),\ldots,\varphi_r(x)$ such that 
    $Rx=\varphi_1(x) e_1+\cdots+\varphi_r(x)e_r$. Since $\{e_1,\ldots,e_r\}$ is a basis for $\mathbb R^r$, each function $\varphi_j\colon X\to \mathbb R$ is a linear functional. Furthermore, positivity of $R$ yields that for each positive vector $x\in X_+$ and $1\leq j\leq r$ the scalars $\varphi_j(x)$ are nonnegative. Thus, $\varphi_1,\ldots,\varphi_r$ are positive linear functionals on $X$. Since for every $x\in X$ we have
    \[Tx=LRx=L\bigg(\sum_{j=1}^r\varphi_j(x)e_j\bigg)=\sum_{j=1}^r\varphi_j(x)Le_j,\]
    we conclude 
    \[T=\sum_{j=1}^rLe_j\otimes \varphi_j=\sum_{j=1}^ry_j\otimes \varphi_j.\]
    This shows that $\rank^+(T)$ is greater than or equal to the minimal $k$ from \eqref{nonnegative_rank_tensors}.    

    Suppose now that $T=\sum_{j=1}^ky_j\otimes \varphi_j$ for some $k\in \mathbb N$, positive vectors $y_1,\ldots,y_k\in Y_+$ and positive linear functionals $\varphi_1,\ldots,\varphi_k\in X'$. 
    We define the operator $R\colon X\to \mathbb R^k$ by 
    \[Rx:=(\varphi_1(x),\ldots,\varphi_k(x)).\]
    Since $\varphi_1,\ldots,\varphi_k$ are positive linear functionals, $R$ is a positive linear operator. 
    To define the operator $L\colon \mathbb R^k\to Y$, we first define $L_0$ on the standard basis $\{e_1,\ldots,e_k\}$ of $\mathbb R^k$ by $L_0e_j:=y_j$ for each $1\leq j\leq k$ and then we linearly extend $L_0$ to $L$ on $\mathbb R^k$.  Since the vector $y_j$ is positive for each $1\leq j\leq k$, the operator $L\colon \mathbb R^k\to Y$ is positive. 
    A similar calculation as in the proof of the opposite direction shows $T=LR$. This shows that $\rank^+(T)\leq k$ which finishes the proof.
\end{proof}

By \cite[Theorem 2.32]{AT}, each positive functional $\varphi\colon X\to \mathbb R$ on a completely metrizable ordered topological vector space $X$ with a closed and generating cone is automatically continuous. Since every linear operator $T\colon X\to Y$ between topological vector spaces with $X$ finite-dimensional and Hausdorff
is continuous, the following corollary immediately follows from Lemma \ref{tensor_representation}. 

\begin{corollary}\label{tensor_representation_continuous}
    Let $T\colon X\to Y$ be a nonzero positive operator between ordered topological vector spaces. If $X$ is completely metrizable with a closed and generating cone, then $T$ has a finite nonnegative rank if and only if there exists $k\in \mathbb N$, positive linear functionals $\varphi_1,\ldots,\varphi_k\in X^*$ and positive vectors $y_1,\ldots,y_k\in Y_+$ such that 
    \begin{equation}\label{nonnegative_rank_tensors_continuous}
     T=\sum_{j=1}^k y_j\otimes \varphi_j.
    \end{equation}
     If $T$ has a finite nonnegative rank, then $\rank^+(T)$ equals to the minimal possible $k$ in \eqref{nonnegative_rank_tensors_continuous}.
\end{corollary}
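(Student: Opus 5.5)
The plan is to deduce the corollary directly from Lemma \ref{tensor_representation}. The only new content is that the positive functionals $\varphi_j$ in \eqref{nonnegative_rank_tensors} automatically lie in the topological dual $X^*$, not merely in the algebraic dual $X'$.

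First, the direction from \eqref{nonnegative_rank_tensors_continuous} to finite nonnegative rank is immediate. Any representation $T=\sum_{j=1}^k y_j\otimes\varphi_j$ with $\varphi_j\in X^*$ positive is in particular a representation with $\varphi_j\in X'$ positive. Lemma \ref{tensor_representation} then gives that $T$ has finite nonnegative rank and that $\rank^+(T)\le k$.

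For the converse, I will assume $T$ has finite nonnegative rank. Lemma \ref{tensor_representation} then gives a representation \eqref{nonnegative_rank_tensors} with $k=\rank^+(T)$ terms, where $y_j\in Y_+$ and $\varphi_j\in X'$ are positive. Since $X$ is completely metrizable with a closed and generating cone, \cite[Theorem 2.32]{AT} shows that every positive linear functional on $X$ is continuous, so each $\varphi_j\in X^*$. This yields \eqref{nonnegative_rank_tensors_continuous} with exactly $\rank^+(T)$ summands.

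It remains to check minimality. Every representation of the form \eqref{nonnegative_rank_tensors_continuous} is also one of the form \eqref{nonnegative_rank_tensors}, so its length is at least $\rank^+(T)$ by the lemma. We have just exhibited one of length exactly $\rank^+(T)$, so the minimal $k$ in \eqref{nonnegative_rank_tensors_continuous} equals $\rank^+(T)$.

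There is no real obstacle here. The only point needing care is to confirm that the hypotheses of \cite[Theorem 2.32]{AT}, namely complete metrizability together with a closed and generating cone, are precisely the ones assumed. The remark about linear maps from finite-dimensional Hausdorff spaces being continuous is not needed for this formulation. It would only be relevant if one also wanted the factor $R\colon X\to\mathbb R^k$ to be continuous, which follows from the continuity of the $\varphi_j$, and $L\colon\mathbb R^k\to Y$ to be continuous, which holds automatically.
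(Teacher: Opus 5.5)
Your proof is correct and is essentially the paper's argument: combine Lemma~\ref{tensor_representation} with the automatic continuity of positive functionals on a completely metrizable ordered space with a closed generating cone, which places the $\varphi_j$ in $X^*$, and note that every continuous representation is also an algebraic one, which gives the minimality claim. Your remark that the continuity of linear maps on finite-dimensional Hausdorff spaces is not needed for this formulation is also accurate.
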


In the remaining part of this section we establish infinite-dimensional analogs of some results from \cite{CR93}.

\begin{proposition}\label{ideal property}
Suppose that
\[
\begin{tikzcd}
X \arrow[r, "A"] & Y \arrow[r, "T"] & Z \arrow[r, "B"] & W
\end{tikzcd}
\]
are positive operators between ordered vector spaces. If $T$ has a finite nonnegative rank, then the operator $BTA\colon X\to W$ also has finite nonnegative rank, and
\[
\rank^{+}(BTA)\leq \rank^{+}(T).
\]
\end{proposition}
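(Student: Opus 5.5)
The plan is to compose a minimal positive factorization of $T$ with $A$ and $B$ and read off the bound directly from the definition. Let $r:=\rank^+(T)$, which is finite by assumption. If $T=0$, then $BTA=0$, and both sides of the inequality are $0$ (the zero operator factors through the zero lattice, $\mathbb R^0$), so we may assume $T\neq 0$ and $r\geq 1$. By the definition of the nonnegative rank, and the remark that $Z$ may be taken to be $\mathbb R^{r}$, there are positive operators $R\colon Y\to \mathbb R^r$ and $L\colon \mathbb R^r\to Z$ with $T=LR$.

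Next, set $R':=RA\colon X\to \mathbb R^r$ and $L':=BL\colon \mathbb R^r\to W$. A composition of positive operators is positive: if $x\geq 0$ then $Ax\geq 0$, hence $RAx\geq 0$, and the same argument applies to $BL$. Therefore
\[
BTA=B(LR)A=(BL)(RA)=L'R'
\]
is a factorization of $BTA$ through the finite-dimensional Archimedean vector lattice $\mathbb R^r$ by positive operators. Consequently $BTA$ has finite nonnegative rank, and by minimality in the definition, $\rank^+(BTA)\leq r=\rank^+(T)$.

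One could alternatively argue via Lemma \ref{tensor_representation}. Writing $T=\sum_{j=1}^r y_j\otimes\varphi_j$, we get $BTA=\sum_{j=1}^r By_j\otimes(\varphi_j\circ A)$, where $By_j\in W_+$ and each $\varphi_j\circ A$ is a positive functional on $X$. The only care needed there is the case $BTA=0$, since the lemma is stated for nonzero operators; the direct factorization argument above avoids this issue. No step presents a real obstacle. The only point to watch is the degenerate zero case, handled by the convention that the zero operator has nonnegative rank $0$.
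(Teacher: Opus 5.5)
Your proof is correct and is essentially the paper's argument. The paper writes $T=\sum_{j=1}^r z_j\otimes\varphi_j$ via Lemma~\ref{tensor_representation} and observes $BTA=\sum_{j=1}^r Bz_j\otimes A'\varphi_j$, which is just the coordinate form of your factorization $BTA=(BL)(RA)$ through $\mathbb R^r$. Working directly with the factorization avoids the lemma's nonzero hypothesis, so the case $BTA=0$ needs no separate treatment; your split on $T=0$ is also unnecessary, since factoring through $\mathbb R^0$ works the same way.
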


\begin{proof}
Let us denote $r:=\rank^+(T)$. 
Since $T$ has a finite nonnegative rank, by Lemma \ref{tensor_representation} there exist positive vectors $z_1,\ldots,z_r \in Z$ 
and positive linear functionals $\varphi_1,\ldots,\varphi_r\in Y'$ such that 
$$T=\sum_{j=1}^r z_j\otimes \varphi_j.$$
Since  
$$BTA=\sum_{j=1}^r Bz_j\otimes A'\varphi_j,$$ Lemma \ref{tensor_representation} yields that $BTA$ also has a finite nonnegative rank which satisfies $\rank^+(BTA)\leq r$. 
\end{proof}

\begin{corollary}\label{ideal property min}
Suppose that
\[
\begin{tikzcd}
X \arrow[r, "A"] & Y \arrow[r, "B"] & Z  
\end{tikzcd}
\]
are positive operators between ordered vector spaces. If $A$ and $B$ have finite nonnegative ranks, then $BA\colon X\to Z$ has a finite nonnegative rank satisfying 
$$\rank^+(BA)\leq \min\{\rank^+(A),\rank^+(B)\}.$$
\end{corollary}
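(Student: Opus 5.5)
The plan is to apply Proposition \ref{ideal property} twice, each time inserting an identity operator as one of the outer factors; positivity of the identity is automatic. The two applications give the two bounds separately, and together they give the minimum.

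First, suppose only that $A$ has finite nonnegative rank. Consider the chain
\[
\begin{tikzcd}
X \arrow[r, "I_X"] & X \arrow[r, "A"] & Y \arrow[r, "B"] & Z .
\end{tikzcd}
\]
Here $I_X$ and $B$ are positive, and $A$ has finite nonnegative rank. Proposition \ref{ideal property} then shows that $BA=BAI_X$ has finite nonnegative rank and $\rank^+(BA)\leq \rank^+(A)$.

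Symmetrically, use the chain
\[
\begin{tikzcd}
X \arrow[r, "A"] & Y \arrow[r, "B"] & Z \arrow[r, "I_Z"] & Z ,
\end{tikzcd}
\]
in which $B$ is the middle operator with finite nonnegative rank. Proposition \ref{ideal property} yields $\rank^+(BA)=\rank^+(I_ZBA)\leq \rank^+(B)$.

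Combining the two inequalities gives $\rank^+(BA)\leq \min\{\rank^+(A),\rank^+(B)\}$. There is no real obstacle here. The only points to check are that identity operators are positive, and that the zero operator is handled consistently: if $BA=0$, its nonnegative rank is $0$, which is trivially below the bound. In fact the argument shows slightly more, namely that finiteness of the nonnegative rank of either factor alone already suffices for the corresponding bound.
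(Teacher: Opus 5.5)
Your proof is correct and follows the paper's argument. The paper inserts the identity on $X$ and applies Proposition \ref{ideal property} to get $\rank^+(BA)\le\rank^+(A)$, then says the bound by $\rank^+(B)$ follows "similarly", which is exactly your second chain with $I_Z$ appended; your side remark that finite nonnegative rank of one factor already gives the corresponding bound is also correct.
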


\begin{proof}
    Consider the diagram 
    \[
\begin{tikzcd}
X  \arrow [r,"I"] & X \arrow[r, "A"] & Y \arrow[r, "B"] & Z  
\end{tikzcd}
\]
of positive operators between ordered vector spaces. Since $A$ has a finite nonnegative rank, by Proposition \ref{ideal property} the operator $BA=BAI$ also has a finite nonnegative rank satisfying $\rank^+(BA) \leq \rank^+(A)$. Similarly we can prove $\rank^+(BA)\leq \rank^+(B)$.
\end{proof}

\begin{corollary}\label{nonnegative rank in finite dimensions}
    Let $T\colon X\to Y$ be a positive operator with a finite nonnegative rank between ordered vector spaces. If $\dim X$ or $\dim Y$ is finite, then 
    $$\rank^+(T)\leq \min\{\dim X,\dim Y\}.$$
\end{corollary}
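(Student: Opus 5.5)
The idea is to factor $T$ through the identity operator of whichever space is finite-dimensional and then apply Corollary \ref{ideal property min}. The only thing to check is that this identity operator has finite nonnegative rank at most the dimension of the space. That check is not automatic, because $X$ or $Y$ is just an ordered vector space, not necessarily a vector lattice.

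So I would avoid the identity on $X$ or $Y$ itself and work directly with the tensor representation. Suppose $n:=\dim Y<\infty$. By Lemma \ref{tensor_representation} we can write $T=\sum_{j=1}^k y_j\otimes\varphi_j$ with $y_j\in Y_+$ and $\varphi_j\ge 0$. Then $\ran T\subseteq \Span\{y_1,\dots,y_k\}$.

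The key step is a Carathéodory-type reduction: any representation with $k>\dim Y$ terms can be shortened by one. If $k>n$, the vectors $y_1,\dots,y_k$ are linearly dependent, so there are scalars $c_j$, not all zero, with $\sum_j c_j y_j=0$. Replacing $-c$ by $c$ if necessary, we may assume some $c_j>0$. Choose $i$ with $c_i>0$ minimizing $\lambda_j:=\varphi_j\mapsto$ the ratio $t$ defined as follows. Since $y_i=-\sum_{j\ne i}(c_j/c_i)y_j$, we get
\[
T=\sum_{j\ne i} y_j\otimes\Bigl(\varphi_j-\tfrac{c_j}{c_i}\varphi_i\Bigr).
\]
The new functionals $\varphi_j-\frac{c_j}{c_i}\varphi_i$ need not be positive, however. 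This is the main obstacle: a pairwise ratio test on functionals does not work in general, since functionals are not scalars.

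The fix is to perturb the vectors instead of the functionals. For $t\ge 0$ write
\[
T=\sum_j (y_j - t c_j y_j)\otimes\varphi_j + t\sum_j c_j y_j\otimes\varphi_j ,
\]
but this still needs $\sum_j c_j y_j\otimes\varphi_j=0$, which is a dependence among the rank-one operators rather than among the $y_j$. The cleanest route is therefore to take the dependence on the other side. The functionals $\varphi_j$ restricted to the question of whether $X$ is finite-dimensional give, when $\dim X<\infty$, a dependence $\sum_j c_j\varphi_j=0$. Taking $t=\min\{1/c_j: c_j>0\}$, we get
\[
T=\sum_j y_j\otimes\varphi_j=\sum_j y_j\otimes(1-tc_j)\varphi_j+t\sum_j c_j\,y_j\otimes\varphi_j ,
\]
which again fails to vanish.

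So honestly I would expect the statement to require an argument through a factorization. Write $T=LR$ with $R\colon X\to\mathbb R^r$ and $L\colon\mathbb R^r\to Y$ positive, where $r=\rank^+(T)$. By minimality, I would try to show that $R$ is surjective onto a subspace that meets the interior of the cone appropriately, namely that the image of $X_+$ under $R$ generates $\mathbb R^r$. Otherwise one could compose with a lattice projection and lower $r$. That would give $r\le\dim X$. Dually, $L$ should be injective on $\mathbb R^r$ up to a similar reduction, giving $r\le\dim Y$. These minimality reductions are the step I would expect to be the hardest. I am not confident this is how the authors proceed; they may have additional assumptions, such as a generating cone, implicit in the intended proof.
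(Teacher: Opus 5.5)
\textbf{Verdict: genuine gap, and in fact the statement is false as written.} Your proposal does not reach a proof. The two Carath\'eodory-type attempts are abandoned, and the final minimality step is exactly where it breaks.

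Minimality of $r=\rank^+(T)$ in a factorization $T=LR$ through $\mathbb R^r$ only shows two things: no coordinate functional $e_j^*\circ R$ vanishes on $X_+$, and no $Le_j$ is zero (otherwise you could delete that coordinate). It does not force $R(X)=\mathbb R^r$, nor does it force $L$ to be injective. A proper subspace of $\mathbb R^r$ that lies in no coordinate hyperplane cannot be removed by a lattice projection.

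No argument can close this gap, because the statement fails for general ordered vector spaces. Here is a counterexample.
\begin{itemize}
\item[(a)] Let $Y=\mathbb R^3$ be ordered by the closed generating cone $K=\cone\{v_1,v_2,v_3,v_4\}$ over a square, where $v_1=(1,1,1)$, $v_2=(-1,1,1)$, $v_3=(-1,-1,1)$, $v_4=(1,-1,1)$.
\item[(b)] Let $X=\mathbb R^4$ carry the coordinatewise order, and define $T$ by $Te_j=v_j$. Then $T=\sum_{j=1}^4 v_j\otimes e_j^*$ is positive and $\rank^+(T)\le 4$.
\item[(c)] Suppose $T=\sum_{l=1}^3 w_l\otimes\psi_l$ with $w_l\in K$ and $\psi_l\ge 0$. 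Then $v_j=\sum_l\psi_l(e_j)w_l$. Since $\mathbb R_+v_j$ is an extreme ray of $K$, some $w_l$ is a nonzero element of $\mathbb R_+v_j$.
\item[(d)] The four rays are distinct, so this requires four different $w_l$, a contradiction. Hence $\rank^+(T)=4>3=\min\{\dim X,\dim Y\}$.
\end{itemize}
Since $\rank(T)=3$, the same $T$ also violates Theorem \ref{yudin rank+=rank}(iv).

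Your opening diagnosis is therefore exactly right, and it pinpoints the flaw in the paper's own argument. The paper writes $T=T\circ I_X$ (or $T=I_Y\circ T$) and applies Corollary \ref{ideal property min} with $\rank^+(I_X)=\dim X$. That equality holds only when $X_+$ is a Yudin cone, i.e.\ when $X$ is lattice isomorphic to some $\mathbb R^n$; otherwise $\rank^+(I_X)=\infty$.

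For instance, suppose $I_Y=\sum_l w_l\otimes\psi_l$ on the space $Y$ above. The same extreme-ray argument forces $I_Y=\sum_{j=1}^4 v_j\otimes\Psi_j$ with $\Psi_j(v_i)=\delta_{ij}$. This is incompatible with the relation $v_1+v_3=v_2+v_4$. Assuming a generating cone, as you suggested, does not help: both cones in the example are generating and closed.

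What is true follows from the one-line argument you had in mind. If $X$ (respectively $Y$) is a finite-dimensional Archimedean vector lattice, then $\rank^+(T)\le\dim X$ (respectively $\rank^+(T)\le\dim Y$), by factoring through that space itself. This covers the normed-lattice case used in Corollary \ref{rank=rank+}(ii).
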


 \begin{proof}
 Suppose that $X$ is finite-dimensional.
       Consider the diagram 
    \[
\begin{tikzcd}
X  \arrow [r,"I"] & X \arrow[r, "T"]  & Y
\end{tikzcd}
\] 
of positive operators.  By Corollary \ref{ideal property min} we have $$\rank^+(T)\leq \min\{\rank^+(T),\rank^+(I)\}\leq \rank^+(I)=\dim X.$$ 
Similarly we treat the case when $Y$ is finite-dimensional.
 \end{proof}

The proof of the following result once more relies on Lemma \ref{tensor_representation}, and is, therefore, omitted. 

\begin{proposition}
Let $A,B\colon X\to Y$ be positive operators between ordered vector spaces. If $A$ and $B$ have finite nonnegative ranks, then $A+B$ has a finite nonnegative rank satisfying
$$\rank^+(A+B)\leq \rank^+(A)+\rank^+(B).$$
\end{proposition}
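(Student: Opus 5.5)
The plan is to apply Lemma \ref{tensor_representation} to both summands and then concatenate the two decompositions. First dispose of the degenerate cases. If $A=0$, then $A+B=B$. Since $\rank^+(0)=0$ (the zero operator factors through the zero vector lattice $\mathbb R^0$), the inequality is trivial in this case. The case $B=0$ is the same, and if $A+B=0$ there is nothing to prove. So assume $A$, $B$ and $A+B$ are all nonzero, which lets us use Lemma \ref{tensor_representation} directly.

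Set $r:=\rank^+(A)$ and $s:=\rank^+(B)$. By Lemma \ref{tensor_representation} there are positive vectors $y_1,\ldots,y_r,\,z_1,\ldots,z_s\in Y_+$ and positive linear functionals $\varphi_1,\ldots,\varphi_r,\,\psi_1,\ldots,\psi_s\in X'$ with
\[
A=\sum_{j=1}^r y_j\otimes\varphi_j,\qquad B=\sum_{i=1}^s z_i\otimes\psi_i .
\]
Adding these gives
\[
A+B=\sum_{j=1}^r y_j\otimes\varphi_j+\sum_{i=1}^s z_i\otimes\psi_i .
\]
This is a representation of the form \eqref{nonnegative_rank_tensors} with $r+s$ terms. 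The operator $A+B$ is positive, being a sum of positive operators.

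Now apply the converse direction of Lemma \ref{tensor_representation}. It shows that $A+B$ has finite nonnegative rank, and that $\rank^+(A+B)$, being the minimal number of terms in such a representation, is at most $r+s$.

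An equivalent route uses factorizations directly: write $A=L_1R_1$ and $B=L_2R_2$, and factor $A+B$ through $\mathbb R^r\oplus\mathbb R^s$ via $x\mapsto(R_1x,R_2x)$ followed by $(u,v)\mapsto L_1u+L_2v$. I expect no real obstacle. The only point needing care is the zero-operator convention, since Lemma \ref{tensor_representation} is stated for nonzero $T$.
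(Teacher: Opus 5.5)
Your proof is correct and follows the paper's intended argument. The paper omits the proof, saying only that it relies on Lemma~\ref{tensor_representation}; concatenating the two tensor representations, with the zero cases handled separately as you do, is exactly that argument.
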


We conclude this section with a result on the finite nonnegative rank of the adjoint of a positive operator between Banach lattices.

\begin{theorem}\label{thm:positive-rank-adjoint-invariance}
Let $T\colon X\to Y$ be a positive operator between Banach lattices. Then $T$ has a finite nonnegative rank if and only if $T^*$ has a finite nonnegative rank. In this case,
\[
\rank^{+}(T)=\rank^{+}(T^*).
\]
\end{theorem}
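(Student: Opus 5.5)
The plan is to work with the tensor representation of \Cref{tensor_representation_continuous}, which applies because Banach lattices are completely metrizable with closed, generating cones. The same holds for the dual Banach lattices $Y^*$ and $X^*$, so the corollary applies to $T^*\colon Y^*\to X^*$ as well. We then transfer representations between $T$ and $T^*$ in both directions, each direction giving an inequality between the two nonnegative ranks.

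\emph{From $T$ to $T^*$.} Suppose $T=\sum_{j=1}^k y_j\otimes\varphi_j$ with $y_j\in Y_+$, $\varphi_j\in X^*_+$, and $k=\rank^+(T)$. For $\psi\in Y^*$ and $x\in X$ we have $(T^*\psi)(x)=\psi(Tx)=\sum_j\psi(y_j)\varphi_j(x)$. Hence
\[T^*=\sum_{j=1}^k \varphi_j\otimes \hat y_j,\]
where $\hat y_j\in Y^{**}$ is the evaluation functional at $y_j$. Each $\hat y_j$ is a positive (and continuous) functional on $Y^*$, and each $\varphi_j\in X^*_+$. By \Cref{tensor_representation}, $T^*$ has finite nonnegative rank and $\rank^+(T^*)\le \rank^+(T)$.

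\emph{From $T^*$ to $T$.} Suppose $T^*=\sum_{j=1}^k f_j\otimes \Phi_j$ with $f_j\in X^*_+$ and $\Phi_j\in (Y^*)^*_+=Y^{**}_+$, using \Cref{tensor_representation_continuous}. Then $T^{**}=\sum_j \Phi_j\otimes \hat f_j$, and for $x\in X$,
\[\widehat{Tx}=T^{**}\hat x=\sum_j f_j(x)\Phi_j.\]
The difficulty is that the $\Phi_j$ lie in $Y^{**}$ rather than in $Y$, so this does not yet give a representation of $T$.

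\emph{Main obstacle: pulling the $\Phi_j$ back into $Y$.} First we remove redundancy by reducing the family $(f_j)$. If some $f_j$ vanishes, drop that term. Next, consider the linear map $F\colon X\to\mathbb R^k$, $Fx=(f_1(x),\dots,f_k(x))$. If $F(X)\ne\mathbb R^k$, there is a nonzero $c\in\mathbb R^k$ with $\sum_j c_jf_j=0$; then on $X$ we may replace the $\Phi_j$ by other combinations. Doing this while preserving positivity is delicate, so instead we argue with the cone directly. Since $X=X_+-X_+$, the set $F(X_+)$ is a convex cone spanning $F(X)$. We work with the positive cone $Y_+$ and prove the following: if $\sum_j f_j(x)\Phi_j\in\hat Y$ for all $x$, then we may choose positive vectors in $Y$ in place of the $\Phi_j$. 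The first thing we would try is a perturbation argument. For each $j$ we use vectors $x$ approximating "extreme" directions of the cone $\overline{F(X_+)}\subseteq\mathbb R^k_+$ and take $y_j=Tx$ suitably normalized. More simply, we take $x_j\in X_+$ with $f_j(x_j)>0$ and form $\widehat{Tx_j}=\sum_i f_i(x_j)\Phi_i$. When $F(X_+)$ is all of $\mathbb R^k_+$ up to closure, the $\Phi_i$ are limits, in the finite-dimensional space $\operatorname{span}\{\Phi_i\}$, of positive elements of $\hat Y$, and hence lie in $\hat Y_+$, since finite-dimensional subspaces of $\hat Y$ are closed.

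In the general case we expect to fall back on the fact that $\operatorname{span}\{\Phi_j\}\cap\hat Y$ is finite-dimensional. Alternatively, we can compose $T^{**}$ with a positive projection-like map: let $E=\operatorname{span}\{\Phi_j\}$, and use the Principle of Local Reflexivity in its lattice form to obtain, for every $\varepsilon>0$, an operator $S\colon E\to Y$ that is the identity on $E\cap\hat Y\supseteq\widehat{\ran T}$ and is almost positive. To get exact positivity one would rather use Proposition \ref{ideal property}. This local-reflexivity step is the crux, and it is where we anticipate most of the work. Once it is done, it yields $T=\sum_j S\Phi_j\otimes f_j$ with $S\Phi_j\in Y_+$, and hence $\rank^+(T)\le\rank^+(T^*)$, which together with the first direction gives the claimed equality.
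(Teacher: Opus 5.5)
Your first direction coincides with the paper's: $T=\sum_j y_j\otimes\varphi_j$ gives $T^*=\sum_j\varphi_j\otimes\iota_Y(y_j)$, hence $\rank^+(T^*)\le\rank^+(T)$.

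\textbf{The converse has a genuine gap.} You correctly isolate the crux: the $\Phi_j$ lie in $Y^{**}_+$, not in $\iota_Y(Y)_+$. But the remedy you propose does not work.
\begin{itemize}
\item The lattice form of local reflexivity only produces maps $S$ that are \emph{approximately} positive.
\item Proposition~\ref{ideal property} cannot make them exactly positive, since it only allows composition with operators that are already positive.
\item A positive $B\colon Y^{**}\to Y$ with $B\iota_Y=I$ would settle it, but such $B$ need not exist. For $Y=c_0$ it would be a bounded projection of $\ell^\infty$ onto $c_0$.
\item The case $\overline{F(X_+)}=\mathbb R^k_+$ is trivial and does not touch the difficulty.
\end{itemize}
The paper's proof passes over the same point. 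From $T^{**}\iota_X=\iota_YT$ and Proposition~\ref{ideal property} it obtains a positive factorization of $\iota_YT$ whose vectors lie in $Y^{**}_+$. It then simply asserts that $\iota_YT\colon X\to\iota_Y(Y)$ has finite nonnegative rank, which is exactly the pull-back you flagged.

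\textbf{The missing idea is an exact positive extension that uses the lattice structure of $Y$.}

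Let $W=F(X)\subseteq\mathbb R^k$ with your $F$, and let $L\colon\mathbb R^k\to Y^{**}$, $Le_j=\Phi_j$. Then $\iota_YT=LF$, so $M:=\iota_Y^{-1}L|_W\colon W\to Y$ is a well-defined linear map. It is positive on the polyhedral cone $W\cap\mathbb R^k_+$, because $\iota_Y$ is an injective lattice homomorphism.

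Suppose $M$ extends to a positive $L'\colon\mathbb R^k\to Y$. Then $y_j:=L'e_j\in Y_+$ and $T=\sum_{j=1}^k y_j\otimes f_j$, so $\rank^+(T)\le k$.

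To build the extension, pick a basis $v_1,\dots,v_d$ of a complement of $W$ and write $e_j=w_j+\sum_i c_{ji}v_i$ with $w_j\in W$. You need $n_1,\dots,n_d\in Y$ with $\sum_i c_{ji}n_i\ge -Mw_j$ for $1\le j\le k$; then set $L'v_i:=n_i$.

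Fourier--Motzkin elimination solves this finite system in any vector lattice:
\begin{itemize}
\item Eliminating $n_d$ leaves the conditions ``each lower bound for $n_d$ is below each upper bound''.
\item Once these hold for some $(n_1,\dots,n_{d-1})\in Y^{d-1}$, the supremum of the finitely many lower bounds is below the infimum of the finitely many upper bounds. So $n_d$ can be chosen in $Y$.
\item After all variables are eliminated, each surviving condition is a nonnegative combination of rows in which the coefficients of the $n_i$ cancel. That is, it has the form $M\lambda\ge0$ with $\lambda\in W\cap\mathbb R^k_+$, so it holds.
\end{itemize}
This gives $\rank^+(T)\le\rank^+(T^*)$ and completes the proof.
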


\begin{proof}
    Suppose that $T$ has a finite nonnegative rank $r$. Then there exist positive vectors $y_1,\ldots,y_r\in Y$ and positive linear functionals $\varphi_1,\ldots,\varphi_r\in X^*$
    such that $T=\sum_{j=1}^r y_j\otimes \varphi_j$. Let $\iota_{_X}$ and $\iota_{_Y}$ denote the canonical embeddings of $X$ and $Y$ into their biduals, respectively. It is well-known that $\iota_{_X}$ and $\iota_{_Y}$ are lattice isometries. In particular, they are positive operators. Then 
    $$T^*=\sum_{j=1}^r\varphi_j \otimes \iota_{_Y}(y_j)$$ shows that $T^*$ has a finite nonnegative rank satisfying $\rank^+(T^*)\leq \rank^+(T)$. Since the norm dual of a Banach lattice is again a Banach lattice, replacing $T$ by $T^*$ gives $\rank^+(T^{**})\leq \rank^+(T^*)\leq \rank^+(T)$. 

    Assume now that $T^{*}$ has a finite nonnegative rank. Then $T^{**}$ has a finite nonnegative rank as well. 
    Let $r=\rank^+(T^{**})$. 
    By Proposition \ref{ideal property} the operator $T^{**}\iota_{_X}\colon X\to Y^{**}$ has a finite nonnegative rank satisfying 
    $\rank^+(T^{**}\iota_{_X})\leq \rank^+(T^{**})$. Since $T^{**}\iota_{_X}=\iota_{_Y}T$, the operator $\iota_{_Y}T\colon X\to \iota_{_Y}(Y)$ has a finite nonnegative rank. Therefore, there exist positive vectors $y_1,\ldots,y_n\in Y$ and positive linear functionals $\varphi_1,\ldots,\varphi_n\in X^*$ for some $n\leq r$ such that 
    $$\iota_{_Y}T=\sum_{j=1}^n  \iota_{_Y}(y_j) \otimes \varphi_j .$$
    Hence, for each $x\in X$ we have 
    $$\iota_{_Y}(Tx)=\sum_{j=1}^n\varphi_j(x) \iota_{_Y}(y_j).$$
    Since $\iota_{_Y}$ is injective, for each $x\in X$ we have 
    $$Tx=\sum_{j=1}^n \varphi_j(x)y_j \qquad \textrm{or equivalently}\qquad T=\sum_{j=1}^ny_j\otimes \varphi_j.$$
    This finally shows that $T$ has a finite nonnegative rank satisfying $\rank^+(T)\leq r=\rank^+(T^{**})$ which finishes the proof.  
\end{proof}

\section{Finitely generated cones and the nonnegative rank}\label{sec:rank and nonnegative rank agree}

If $T\colon X\to Y$ is a positive operator between ordered vector spaces with $X_+$ finitely generated, then $TX_+$ is by \cite[Exercise 3.6.1]{AT} also finitely-generated. The following result shows that in general $TX_+$ is always contained in a finitely generated cone whenever $T$ has a finite non-negative rank.

 \begin{proposition}\label{poliederski_range_cone}
    Let $T\colon X\to Y$ be a positive finite-rank operator between ordered vector spaces. If $\rank^+(T)<\infty$, then the cone $TX_+$ is contained in a finitely generated cone. 
 \end{proposition}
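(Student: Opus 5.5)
The plan is to use the tensor representation from Lemma \ref{tensor_representation} and read off generators for a cone containing $TX_+$. If $T=0$, then $TX_+=\{0\}$, which is the finitely generated cone $\cone(\emptyset)=\{0\}$ (or $\cone(\{0\})$), so there is nothing to prove. Assume therefore that $T\neq 0$ and set $r:=\rank^+(T)<\infty$.

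By Lemma \ref{tensor_representation}, there are positive vectors $y_1,\ldots,y_r\in Y_+$ and positive linear functionals $\varphi_1,\ldots,\varphi_r\in X'$ with
\[
T=\sum_{j=1}^r y_j\otimes\varphi_j .
\]
Take an arbitrary $x\in X_+$. Then
\[
Tx=\sum_{j=1}^r \varphi_j(x)\,y_j,
\]
and each coefficient $\varphi_j(x)$ is a nonnegative real number because $\varphi_j$ is positive and $x\geq 0$. So $Tx$ is a nonnegative linear combination of $y_1,\ldots,y_r$, which means $Tx\in\cone(\{y_1,\ldots,y_r\})$.

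This shows $TX_+\subseteq \cone(\{y_1,\ldots,y_r\})$. The right-hand side is finitely generated by definition, and it lies in $Y_+$ since every $y_j$ is positive.

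There is no real obstacle here. The only points needing care are the degenerate case $T=0$, where Lemma \ref{tensor_representation} assumes $T\neq 0$, and the observation that the positivity of the functionals $\varphi_j$ is exactly what makes the coefficients nonnegative. One could also note that the generating set can be chosen with exactly $\rank^+(T)$ elements.
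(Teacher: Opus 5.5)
Your proof is correct and matches the paper's argument: you dispose of $T=0$, write $T=\sum_{j} y_j\otimes\varphi_j$ via Lemma \ref{tensor_representation}, and conclude $TX_+\subseteq\cone\{y_1,\ldots,y_k\}$ because each $\varphi_j(x)\geq 0$ for $x\in X_+$.
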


 \begin{proof}
    Obviously we may assume that $T$ is nonzero. By assumption, there exist $k\in \mathbb N$, positive linear functionals $\varphi_1,\ldots,\varphi_k\in X'$ and positive vectors $y_1,\ldots,y_k\in Y$ such that 
    \[T=\sum_{j=1}^k y_j\otimes \varphi_j.\] 
    Since for each $x\in X_+$ we have
    \[Tx=\sum_{j=1}^k\varphi_j(x)y_j,\]
    $TX_+$ is contained in $\cone\{y_1,\ldots,y_k\}$.  
 \end{proof}

In Section 5 we are going to prove that there exists a rank three operator which does not have a finite nonnegative rank, yet the cone $TX_+$ is contained in a finitely generated cone. 

Let $T\colon X\to Y$ be a linear operator between ordered vector spaces. The \emph{range cone} of $T$ in $Y$ is the cone $C_T:=\ran T\cap Y_+$. Note that $C_T$ is indeed a cone as $C_T\cap (-C_T)\subseteq Y_+\cap (-Y_+)=\{0\}$ and $0\in C_T$ yield $C_T\cap (-C_T)=\{0\}$. If $T$ is a positive operator, then the cone $TX_+$ is always contained in the range cone $C_T$ of the operator $T$.

\begin{lemma}\label{generating range cone}
    Let $T\colon X\to Y$ be a positive operator between ordered vector spaces. If $X_+$ is generating in $X$, then the range cone $\ran T\cap Y_+$ is generating in $\ran T$. 
\end{lemma}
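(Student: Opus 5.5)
The plan is to verify directly that every vector of $\ran T$ is a difference of two vectors in $\ran T\cap Y_+$; by definition this is what it means for the range cone to be generating in $\ran T$. Fix $y\in\ran T$ and choose $x\in X$ with $y=Tx$. Since $X_+$ is generating in $X$, we can write $x=x_1-x_2$ with $x_1,x_2\in X_+$.

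Next we put $y_1:=Tx_1$ and $y_2:=Tx_2$. Positivity of $T$ gives $y_1,y_2\in Y_+$, and both vectors clearly lie in $\ran T$, so $y_1,y_2\in C_T=\ran T\cap Y_+$. Linearity of $T$ then yields
\[
y=Tx=Tx_1-Tx_2=y_1-y_2\in C_T-C_T .
\]
Hence $\ran T=C_T-C_T$. In fact this shows the stronger statement that the smaller cone $TX_+\subseteq C_T$ is already generating in $\ran T$.

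There is no real obstacle here. The only point needing care is that the decomposition must land inside $\ran T$ and not just in $Y$. This holds automatically because $y_1$ and $y_2$ are images under $T$, and $C_T$ was already observed before the statement to be a cone.
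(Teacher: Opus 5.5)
Your proof is correct and matches the paper's argument, which writes $\ran T=TX_+-TX_+\subseteq C_T-C_T$ and notes that the reverse inclusion $C_T-C_T\subseteq\ran T$ holds automatically because $\ran T$ is a subspace containing $C_T$. You leave that reverse inclusion implicit when you conclude $\ran T=C_T-C_T$, but it is immediate, so nothing is missing.
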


\begin{proof}
   Since the cone $X_+$ is generating in $X$, we have
    \[\ran T = TX=TX_+-TX_+ \subseteq C_T-C_T.\]
    Since we always have $C_T-C_T\subseteq \ran T$, we have $\ran T=C_T-C_T$, so that $C_T$ is generating in $\ran T$. 
\end{proof}

The following theorem is an infinite-dimensional version of \cite[Theorem 4.1]{CR93}. It provides sufficient conditions for a positive operator to have a finite nonnegative rank.

\begin{theorem}\label{yudin rank+=rank}
     Let $T\colon X\to Y$ be a positive finite-rank operator between ordered vector spaces. Suppose that the cone $X_+$ is generating in $X$. Then $\rank(T)=\rank^+(T)$ in either of the following cases. 
     \begin{enumerate}
         \item[(i)] The range cone of $T$ is a Yudin cone in $\ran T$;
         \item[(ii)] $\rank(T)\leq 1$;
         \item[(iii)] $\rank(T)=2$  and $Y$ is a topological ordered vector space with a closed cone $Y_+$.
        \item[(iv)]  $\rank^+(T)<\infty$, $\min\{\dim X,\dim Y\}\leq 3$ and $Y$ is a topological ordered vector space with a closed cone $Y_+$.
     \end{enumerate} 
\end{theorem}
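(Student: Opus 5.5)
Throughout, $n:=\rank(T)$. By \eqref{rank - nonnegative rank inequality} it suffices in each case to exhibit a representation $T=\sum_{j=1}^n y_j\otimes\varphi_j$ with $y_j\in Y_+$ and $\varphi_j\in X'$ positive; then Lemma \ref{tensor_representation} gives $\rank^+(T)\le n$.

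\emph{Case (i).} The range cone $C_T$ is generated by a Hamel basis $\{y_1,\dots,y_n\}$ of $\ran T$; each $y_j$ lies in $C_T\subseteq Y_+$. Write $Tx=\sum_j\varphi_j(x)y_j$. The coordinate maps $\varphi_j$ are linear because $\{y_j\}$ is a basis. For positivity, take $x\in X_+$. Then $Tx\in C_T=\cone\{y_1,\dots,y_n\}$, so $Tx=\sum\lambda_j y_j$ with $\lambda_j\ge0$. Uniqueness of coordinates gives $\varphi_j(x)=\lambda_j\ge0$. Note that (i) does not need the generating assumption.

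\emph{Cases (ii) and (iii).} The plan is to reduce both to (i) by showing that $C_T$ is a Yudin cone in $\ran T$. By Lemma \ref{generating range cone}, $C_T$ is a generating cone in $\ran T$.
\begin{itemize}
\item If $n=0$, $T=0$ and there is nothing to prove.
\item If $n=1$, $\ran T=\Span\{y\}$. A generating cone in a line is $\cone\{y\}$ for a suitable sign of $y$, since it cannot contain both $\pm y$. This is Yudin.
\item If $n=2$, $\ran T$ is a plane. It carries its unique Hausdorff vector topology, which agrees with the one inherited from $Y$, so $C_T=\ran T\cap Y_+$ is closed in $\ran T$. A closed, pointed, generating cone in $\mathbb R^2$ is a closed angular sector of opening strictly less than $\pi$ with nonempty interior. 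Hence it equals $\cone\{u,v\}$ for two linearly independent boundary rays $u,v$, which is a Yudin cone.
\end{itemize}
The main technical point is this planar classification. I would argue it directly: pick $w$ in the interior of $C_T$ and a linear functional $f$ on the plane, strictly positive on $C_T\setminus\{0\}$. The existence of $f$ follows from pointedness plus closedness in dimension two, by separating the closed cone from $-C_T$ apart from $0$. Then $\{z\in C_T: f(z)=1\}$ is a compact convex subset of a line, hence a segment $[u,v]$, and $C_T=\cone\{u,v\}$.

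\emph{Case (iv).} Here $n\le\rank^+(T)\le\min\{\dim X,\dim Y\}\le 3$ by Corollary \ref{nonnegative rank in finite dimensions}.
\begin{itemize}
\item If $n\le2$, we are in (ii) or (iii).
\item If $n=3$ and $\rank^+(T)<\infty$, then $\rank^+(T)\le 3=n$, and equality follows from \eqref{rank - nonnegative rank inequality}.
\end{itemize}
So (iv) is essentially bookkeeping once (ii)–(iii) are in place. The genuine obstacle is confirming in (iii) that $C_T$ is closed in $\ran T$, which uses $Y_+$ closed and the finite-dimensionality of $\ran T$, and that pointed plus generating plus closed forces exactly two extreme rays.
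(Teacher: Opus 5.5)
Your proof is correct and follows the paper's route. Case (i) uses the coordinate functionals of the positive Hamel basis generating $C_T$; your $\varphi_j$ are the paper's $T'\varphi_j$, and you rightly note that the generating hypothesis is not needed there. Cases (ii)--(iii) use Lemma~\ref{generating range cone} to show that $C_T$ is a Yudin cone, with your elementary compact-base argument replacing the paper's citation of \cite[Theorem~3.5]{ABPW}, and case (iv) follows from Corollary~\ref{nonnegative rank in finite dimensions}.

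Two small tightenings are worth making. First, identifying the inherited topology on $\ran T$ with the Euclidean one needs $Y$ to be Hausdorff. This holds because $\overline{\{0\}}\subseteq Y_+\cap(-Y_+)=\{0\}$; the paper cites \cite[Theorem~2.3]{AT}. You can also bypass it: the Euclidean topology is the finest vector topology on $\ran T$, so the relatively closed set $C_T$ is automatically Euclidean-closed. Second, the strictly positive $f$ should come from strictly separating $0$ from the compact convex hull of $\{z\in C_T:\|z\|=1\}$. A functional that merely separates $C_T$ from $-C_T$ can vanish on a boundary ray, so it need not be strictly positive on $C_T\setminus\{0\}$.
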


\begin{proof}
(i) Since the range cone $C_T:=\ran T\cap Y_+$ is a Yudin cone in a finite-dimensional space $\ran T$, it is generated by a positive Hamel basis $\mathcal B:=\{y_1,\ldots,y_n\}$ of $\ran T$. By \cite[Theorem 3.21]{AT} (see also \cite{Yud39}) we conclude that $\ran T$ is a finite-dimensional Archimedean vector lattice. 

To prove that $T$ has a finite nonnegative rank, we construct positive operators $L\colon \ran T\to Y$ and $R\colon X\to \ran T$ such that $T=LR$. For the operator $L\colon \ran T\to Y$ we take the inclusion operator, which is obviously positive. To define the operator  $R\colon X\to \ran T$, we first introduce the dual basis 
$\{\varphi_1,\ldots,\varphi_n\}$ of $\mathcal B$ and set
$$R=\sum_{k=1}^n y_k \otimes T^\prime \varphi_k.$$ 
Since each vector $y\in C_T$ is a nonnegative linear combination of vectors from $\mathcal B$, each linear functional $\varphi_j$ is positive, which yields positivity of $T^\prime \varphi_j$. Since $R$ is a finite sum of positive rank-one operators, it is a positive operator. 

We claim that $T=LR$. To this end, pick any vector $x\in X_+$. Then $Tx\in C_T$, so that there exist nonnegative real numbers $\lambda_1,\ldots,\lambda_n$ such that $Tx=\lambda_1y_1+\cdots+\lambda_ny_n$. Since  $\varphi_i(y_j)=1$ if $i=j$ and zero otherwise, for any $1\leq k\leq n$ we have $\varphi_k(Tx)=\sum_{j=1}^n \lambda_j \varphi_k(y_j)=\lambda_k$ yielding 
\begin{align*}
LRx &= \sum_{k=1}^n(y_k\otimes T^\prime \varphi_k)x=\sum_{k=1}^n\varphi_k(Tx)y_k=Tx.    
\end{align*}
Since $T$ and $LR$ agree on the generating cone $X_+$ of $X$, we have $T=LR$. This proves that $\rank^+(T)\leq \rank(T)$. Since the reverse inequality always hold, we have $\rank^+(T)=\rank(T)$. 

(ii) and (iii) If $T=0$, then $T$ clearly factors through the zero-dimensional space. So, suppose $T\neq 0$. In both cases when $\rank T$ is either $1$ or $2$, we will prove that the range cone is a Yudin cone, so that an application of (i) will conclude the proof. 

Consider now the case $\rank T=1$. Since $X_+$ is generating in $X$, there exists a vector $x\in X_+$ such that $y:=Tx$ is nonzero and positive. Since $\rank T=1$, the range cone $C_T$ is generated by the Hamel basis $\{y\}$ of $\ran T$. Now we apply (i).

Suppose now that $\ran T=2$. Since $Y_+$ is closed in $Y$, by \cite[Theorem~2.3]{AT} the topological vector space $Y$ is Hausdorff. Since $\ran T$ is finite-dimensional in $Y$, it is closed. Hence, the range cone $C_T=\ran T\cap Y_+$ is also closed in $Y$. Since the range cone of $T$ is generating, \cite[Theorem~3.5]{ABPW} yields that $C_T$ is a Yudin cone in $\ran T$. 

(iv) If $\min\{\dim X,\dim Y\}\leq 2$, then the rank of $T$ is at most two. So, by (ii) or (iii) we have $\rank(T)=\rank^+(T)$.

Assume now that $\min\{\dim X,\dim Y\}=3$.  Then $\rank(T)\leq 3$. As above, if $\rank(T)\leq 2$, then $\rank(T)=\rank^+(T)$. Hence, we may assume $\rank(T)=3$. To finish the proof observe that Corollary \ref{nonnegative rank in finite dimensions} yields $3=\rank(T)\leq \rank^+(T)\leq 3$. 
\end{proof}
 
The following corollary immediately follows from Theorem \ref{yudin rank+=rank}.

\begin{corollary}\label{rank=rank+}
   Let $T\colon X\to Y$ be a positive operator between normed lattices. 
   If $T$ has a finite nonnegative rank, then 
   $\rank^+(T)=\rank(T)$ in either of the following cases. 
   \begin{enumerate}
       \item[(i)] $\rank(T)\leq 2$;
       \item[(ii)] $\min\{\dim X,\dim Y\}\leq 3$.
   \end{enumerate}
\end{corollary}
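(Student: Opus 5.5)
The plan is to reduce Corollary \ref{rank=rank+} to Theorem \ref{yudin rank+=rank}, parts (iii) and (iv), by checking that normed lattices satisfy that theorem's standing hypotheses.

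First I verify the hypotheses. Let $X$ and $Y$ be normed lattices. In any vector lattice each $x$ can be written as $x=x^+-x^-$ with $x^\pm\ge 0$, so $X_+$ is generating in $X$. With the norm topology, $Y$ is a topological ordered vector space. Its positive cone is closed, because $Y_+=\{y: y^-=0\}$, the lattice operation $y\mapsto y^-$ is norm continuous (it satisfies $\|y^- - z^-\|\le\|y-z\|$), and a norm is Hausdorff. Since $T$ is assumed to have finite nonnegative rank, it is a finite-rank operator, as noted after the definition via $\ran T\subseteq\ran L$.

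Case (i) splits by the value of $\rank(T)$. If $\rank(T)\le 1$, Theorem \ref{yudin rank+=rank}(ii) applies directly. If $\rank(T)=2$, Theorem \ref{yudin rank+=rank}(iii) applies, since $Y_+$ is closed.

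Case (ii) is exactly Theorem \ref{yudin rank+=rank}(iv). Its hypotheses are $\rank^+(T)<\infty$, $\min\{\dim X,\dim Y\}\le 3$ and a closed cone $Y_+$, and all of these have been verified above.

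There is no real obstacle in this argument. The only point needing care is the closedness of $Y_+$ in a normed lattice, which follows from the continuity of the lattice operations as described above.
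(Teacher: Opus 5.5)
Your proposal is correct and follows the paper's approach: the paper simply states that the corollary follows immediately from Theorem~\ref{yudin rank+=rank}. Your checks that $X_+$ is generating, that $Y_+$ is norm closed (via $\|y^- - z^-\|\le\|y-z\|$), and that $T$ has finite rank supply exactly the hypotheses needed to invoke parts (ii)--(iv) of that theorem.
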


\section{An example of a positive rank three operator with infinite nonnegative rank}\label{sec:example}

It follows from \cite[Proposition 3.1]{BL09} that for any $k \geq 3$ there exist $n \in \mathbb{N}$ and an $n\times n$ nonnegative matrix $T$ such that $\rank^+(T)=k$ and $\rank(T)=3$. This poses the following question.

\begin{question} 
Does there exist a rank-three positive operator $T\colon X\to Y$ between Banach lattices such that $\rank^{+}(T)=\infty$?
\end{question}

In this section we will provide an example of such an operator. Let $X$ be either $C[0,2\pi]$ or $L^p[0,2\pi]$ for some $1\leq p\leq \infty$ and consider the integral operator $\mathscr S\colon X\to X$ defined as 
$$(\mathscr S f)(s)=\int_0^{2\pi} f(t)(1+\cos(s-t))\,dt$$
where in the case $1\leq p\leq \infty$ the equality above is meant to hold for almost every $s\in[0,2\pi]$. 
Let $u_1,u_2,u_3\in X$ be given by $u_1(t)=\cos t$, $u_2(t)=\sin t$, and $u_3(t)=1$. 
Define linear functionals $\varphi_1,\varphi_2,\varphi_3\colon X\to\mathbb R$ by
\[
\varphi_1(f)=\int_{0}^{2\pi} f(t)\cos t\,dt,\qquad
\varphi_2(f)=\int_{0}^{2\pi} f(t)\sin t\,dt,\qquad
\varphi_3(f)=\int_{0}^{2\pi} f(t)\,dt.
\]
Since 
$$(\mathscr S f)(s)= \int_0^{2\pi} f(t) \cos t\,dt \cdot \cos s+\int_0^{2\pi} f(t) \sin t\,dt \cdot \sin s + \int_0^{2\pi} f(t) \,dt,$$
the operator $\mathscr S$ admits the representation
\begin{equation}\label{tensor representation od S}
\mathscr S = u_1\otimes\varphi_1 + u_2\otimes\varphi_2 + u_3\otimes\varphi_3.
\end{equation}

We will show in Theorem \ref{rank three infinite nonnegative rank} that the operator $\mathscr S$ has infinite nonnegative rank. To this end, we first establish the following two lemmas.

\begin{lemma}\label{lastnosti S}
The operator $\mathscr S\colon X\to X$ is a positive rank-three operator with $\ran \mathscr S=\Span\{u_1,u_2,u_3\}$.
\end{lemma}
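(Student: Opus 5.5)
The proof splits into three checks: positivity, the inclusion $\ran \mathscr S\subseteq \Span\{u_1,u_2,u_3\}$, and the reverse inclusion, which gives rank three.

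\emph{Positivity.} The kernel $k(s,t)=1+\cos(s-t)$ is continuous on $[0,2\pi]^2$ and satisfies $k\ge 0$, since $\cos(s-t)\ge -1$. For $f\ge 0$ (almost everywhere in the $L^p$ case), the integrand $f(t)k(s,t)$ is therefore nonnegative and integrable, because $f\in L^1[0,2\pi]$ in all cases. Hence $(\mathscr S f)(s)\ge 0$ for every $s$. Continuity of $k$ also shows that $\mathscr S f$ is a continuous function, so $\mathscr S$ indeed maps $X$ into $X$.

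\emph{Range inclusion.} The representation \eqref{tensor representation od S} shows that $\mathscr S f=\varphi_1(f)u_1+\varphi_2(f)u_2+\varphi_3(f)u_3$ for every $f\in X$. Hence $\ran\mathscr S\subseteq\Span\{u_1,u_2,u_3\}$, and in particular $\rank \mathscr S\le 3$.

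\emph{Reverse inclusion.} I would show that each $u_j$ lies in the range by evaluating $\mathscr S$ on the $u_i$ themselves, using the orthogonality relations on $[0,2\pi]$:
\[
\int_0^{2\pi}\cos^2 t\,dt=\int_0^{2\pi}\sin^2 t\,dt=\pi,\qquad \int_0^{2\pi}\cos t\sin t\,dt=\int_0^{2\pi}\cos t\,dt=\int_0^{2\pi}\sin t\,dt=0 .
\]
These give $\mathscr S u_1=\pi u_1$, $\mathscr S u_2=\pi u_2$ and $\mathscr S u_3=2\pi u_3$. Thus each $u_j$ is a nonzero multiple of an element of the range, so $\Span\{u_1,u_2,u_3\}\subseteq\ran\mathscr S$.

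It then remains to check that $u_1,u_2,u_3$ are linearly independent in $X$. This follows from the same orthogonality relations: if $a\cos t+b\sin t+c=0$ almost everywhere, integrate against $\cos t$, $\sin t$ and $1$ to obtain $a=b=c=0$. Combined with the two inclusions, this gives $\ran\mathscr S=\Span\{u_1,u_2,u_3\}$ and $\rank\mathscr S=3$.

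No step is a real obstacle. The only point needing care is the $L^p$ and $L^\infty$ setting: there one must note that $\mathscr S f$ has a continuous representative, and that the identities hold almost everywhere.
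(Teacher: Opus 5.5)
Your proposal is correct and follows the paper's proof: positivity comes from the nonnegative kernel, the inclusion $\ran\mathscr S\subseteq\Span\{u_1,u_2,u_3\}$ from the tensor representation, and the reverse inclusion (hence rank three) from $\mathscr S u_1=\pi u_1$, $\mathscr S u_2=\pi u_2$, $\mathscr S u_3=2\pi u_3$. Your explicit checks that $u_1,u_2,u_3$ are linearly independent and that $\mathscr S$ maps $X$ into $X$ only fill in details the paper leaves implicit.
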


\begin{proof}
Since the function $(s,t)\mapsto 1+\cos(s-t)$ is nonnegative, the operator $\mathscr S$ is positive. The representation \eqref{tensor representation od S} shows that 
the range $\ran \mathscr S $ is contained in $\Span\{u_1,u_2,u_3\}$. From $\mathscr S u_1=\pi u_1$, $\mathscr S u_2=\pi u_2$ and $\mathscr S u_3=2 \pi u_3$ we conclude $\rank(\mathscr S )=3$.
\end{proof}

\begin{lemma}\label{no_finite_sum_rep}
Let $X$ and $Y$ each be either $C[0,2\pi]$ or $L^{p}[0,2\pi]$ for some $1 \le p \le \infty$.
Then there does not exist a positive integer $N \in \mathbb{N}$ and collections of nonnegative functions
$a_1,\dots,a_N \in X$ and $b_1,\dots,b_N \in Y$
such that
$$
1+\cos(s-t)=\sum_{k=1}^N a_k(s)\, b_k(t)
$$
holds almost everywhere on $[0,2\pi]\times[0,2\pi]$.
\end{lemma}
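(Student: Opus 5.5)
We argue by contradiction: we reduce the a.e.\ identity to exact nonnegative factorizations of finite sampled matrices of the kernel $K(s,t):=1+\cos(s-t)$, all with at most $N$ terms. We then show that such matrices cannot exist, using combinatorics of the zero pattern of $K$ and a closedness argument.

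\textbf{Step 1 (sampling).} Suppose $K(s,t)=\sum_{k=1}^N a_k(s)b_k(t)$ a.e. We may choose representatives with $a_k,b_k\ge 0$ everywhere. By Fubini there is a full-measure set $G\subseteq[0,2\pi]$ such that for every $s\in G$ the identity holds for a.e.\ $t$. Given finitely many $s_1,\dots,s_n\in G$, the intersection of the corresponding full-measure $t$-sets again has full measure, so we can pick $t_1,\dots,t_n$ in it. Then the $n\times n$ matrix $M=[K(s_i,t_j)]$ equals $AB$ with $A=[a_k(s_i)]\in\mathbb R_+^{n\times N}$ and $B=[b_k(t_j)]\in\mathbb R_+^{N\times n}$. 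Hence $\rank^+(M)\le N$. Since $G$ and the $t$-sets are dense, the points can be chosen arbitrarily close to prescribed targets. We take the targets $\sigma_i=2\pi i/n$ and $\tau_j=\sigma_j+\pi$ (mod $2\pi$), with $\tau_j$ reduced into $[0,2\pi]$. By continuity of $K$, the matrices $M$ then approximate $M_n:=[1+\cos(\sigma_i-\tau_j)]=[1-\cos(2\pi(i-j)/n)]$ as closely as we like.

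\textbf{Step 2 (closedness).} The set of nonnegative $n\times n$ matrices with $\rank^+\le N$ is closed. To see this, take $M^{(m)}=A^{(m)}B^{(m)}\to M_n$. We discard zero columns of $A^{(m)}$ and rescale so that every column of $A^{(m)}$ has entry sum $1$, compensating in the rows of $B^{(m)}$. Then the entries of $B^{(m)}$ are bounded by the total sum of $M^{(m)}$, which is bounded. By compactness we pass to convergent subsequences and obtain $M_n=AB$ with $N$ nonnegative terms. Combining Steps 1 and 2 gives $\rank^+(M_n)\le N$ for every $n$.

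\textbf{Step 3 (lower bound, the main point).} We show $\rank^+(M_n)\ge\log_2 n$. The entry $(M_n)_{ij}$ vanishes iff $i=j$, so the support of $M_n$ is the complement of the identity. If $M_n=\sum_{k=1}^r \alpha_k\beta_k^{T}$ with $\alpha_k,\beta_k\ge 0$, then each rectangle $R_k:=\operatorname{supp}\alpha_k\times\operatorname{supp}\beta_k$ avoids the diagonal, and the $R_k$ cover all off-diagonal positions. Assign to each index $i$ the set $S_i=\{k: i\in\operatorname{supp}\alpha_k\}$. For $i\ne j$, the position $(i,j)$ is covered by some $R_k$, so $k\in S_i$ and $j\in\operatorname{supp}\beta_k$. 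Since $R_k$ avoids the diagonal, $(j,j)\notin R_k$, and therefore $k\notin S_j$. Hence the sets $S_i$ are pairwise distinct, which forces $2^r\ge n$. Taking $n>2^N$ contradicts $\rank^+(M_n)\le N$.

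I expect the only delicate point to be Step 1. The a.e.\ identity must be turned into exact equalities at finitely many points, with the zero pattern approached only in the limit; Step 2 is what makes passing to that limit legitimate. One should also check that the same argument works uniformly for all the choices $C[0,2\pi]$ and $L^p$. It does, because only measurability and nonnegativity of $a_k,b_k$ are used, together with continuity of $K$.
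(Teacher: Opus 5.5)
Your proof is correct, and it takes a genuinely different route from the paper's. The paper works directly in the continuum. For $t$ in a full-measure set $T_0$, it identifies the one-point zero set $Z_t$ of $s\mapsto 1+\cos(s-t)$ with $\bigcap_{k\in I(t)}Z(a_k)$, where $I(t)=\{k: b_k(t)>0\}$. It then concludes by pigeonhole: there are at most $2^N$ such intersections but uncountably many distinct $Z_t$. You discretize instead. Fubini and density of full-measure sets give exact nonnegative factorizations, with at most $N$ terms, of sampled matrices. Closedness of $\{\rank^+\le N\}$ transfers this to the exact matrix $M_n$, whose zeros are exactly the diagonal. Your sets $S_i$ then play the role of the paper's $I(t)$. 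What your route buys is robustness. The paper's step ``$F_t(s)=0$ iff $a_k(s)=0$ for all $k\in I(t)$'' uses the identity at every $s$, but the identity is only known for almost every $s$. This is harmless when the $a_k$ are continuous ($X=C[0,2\pi]$, or $Y=C[0,2\pi]$ after exchanging roles via the symmetry of the kernel). When both $X$ and $Y$ are $L^p$-spaces, however, $Z_t$ is a null set, and indeed the whole anti-diagonal is a planar null set, so equality up to null sets carries no information. Your proof needs the identity only at finitely many points and recovers the exact zero pattern through the compactness in Step 2. It therefore covers all choices of $X$ and $Y$ uniformly, at the modest cost of the closedness lemma. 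It also gives quantitative information. You actually obtain $k\in S_i\setminus S_j$ for $i\neq j$, so $\{S_i\}$ is an antichain, and Sperner's theorem sharpens your bound to $n\le\binom{r}{\lfloor r/2\rfloor}$. For these matrices $M_n$, which have rank three for $n\ge 3$, this is in line with the bound $n\ge\binom{k}{\lfloor k/2\rfloor}$ quoted in the introduction.
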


\begin{proof}
Assume that there exist $N\in \mathbb N$, and nonnegative functions $a_1,\ldots,a_N\in X$ and $b_1,\ldots,b_N\in Y$ such that 
$$
1+\cos(s-t)=\sum_{k=1}^N a_k(s)\, b_k(t)
$$
holds almost everywhere on $[0,2\pi]^2$. We define 
$$G(s,t)=1+\cos(s-t)-\sum_{k=1}^N a_k(s)\, b_k(t).$$
Then $G(s,t)=0$ holds almost everywhere on $[0,2\pi]^2$.

We claim that there exists a measurable set $T_0\subseteq [0,2\pi]$ with full measure such that
\begin{enumerate}
    \item[(i)] functions $a_k$ and $b_k$ are defined on $T_0$ for all $1\leq k\leq N$;
    \item[(ii)] for every $t\in T_0$ we have $G(s,t)=0$ for almost every $s\in [0,2\pi]$.
\end{enumerate}
To this end, let us consider the set 
$$E=\{(s,t)\in [0,2\pi]^2:\; G(s,t)\neq 0\}.$$
By assumption, the two-dimensional Lebesgue measure of $E$ is zero. By the Tonelli theorem for the indicator function $\chi_E$ of the set $E$ we have 
$$0= \int_{0}^{2\pi}\int_{0}^{2\pi} \chi_{E}(s,t)\,ds\,dt
= \int_{0}^{2\pi}\left(\int_{0}^{2\pi} \chi_{E}(s,t)\,ds\right) dt.$$
Since the function 
$$t\mapsto \int_0^{2\pi} \chi_E(s,t)\,ds$$ is nonnegative, it follows that 
$$\int_{0}^{2\pi}\chi_{E}(s,t)\,ds = 0$$
for almost every $t$. 

Next, both functions $a_k$ and $b_k$ are defined at least almost everywhere, since they belong to $X$ and $Y$, respectively.  For each $k$, let
$$B_k=\{t\in[0,2\pi]: a_k(t) \textrm{ or } b_k(t) \textrm{ is not defined}\}.$$
Then each $B_k$ has measure zero, so the finite union 
$B:= \bigcup_{k=1}^{N} B_k$
also has measure zero.

Finally, define
$$T_0 := \bigg\{ t\in[0,2\pi]\;: 
\int_{0}^{2\pi}\chi_{E}(s,t)\,ds = 0 \bigg\} \setminus B.
$$
and observe that $T_0$ has full measure. In particular, $T_0$ is uncountable. 

For each $t\in T_0$ we consider the function $F_t$ defined as
$$F_t(s)=1+\cos(s-t)=\sum_{k=1}^Na_k(s)b_k(t).$$
The zero set $Z_t$ of the function $F_t$ consists of all $s\in [0,2\pi]$ such that $s-t=\pi$. Hence, the zero set $Z_t$ consists of a single point. In particular, the family $\{Z_t:\; t\in T_0\}$ contains uncountably many distinct singleton sets. 

On the other hand, for every $t\in T_0$ the identity
$$F_t(s)=\sum_{k=1}^Na_k(s)b_k(t)$$
holds for almost every $s\in [0,2\pi]$. For each $t\in T_0$ define the index set 
$$I(t):=\bigg\{k\in \{1,\ldots,N\}:\; b_k(t)>0\bigg\}.$$ Since functions $a_1,\ldots,a_N,b_1,\ldots,b_N$ are nonnegative, for a fixed $t\in T_0$ we have $F_t(s)=0$ if and only if $a_k(s)=0$ for every $k\in I(t)$. Hence, 
$$Z_t=\bigcap_{k\in I(t)}Z(a_k),$$
where $Z(f)$ denotes the zero set of a real function $f$ on $[0, 2\pi]$.
Consequently, for every $t\in T_0$ the zero set $Z_t$ is of the form 
$$\bigcap_{k\in J}Z(a_k)$$
for some subset $J\subseteq \{1,\ldots,N\}$.
Since there are only $2^N$ possible subsets $J$, there are at most $2^N$ such intersections. Therefore, the family $\{Z_t:\; t\in T_0\}$ can have cardinality at most $2^N$, contradicting the fact that it contains uncountably many distinct zero sets. 
\end{proof}

\begin{theorem}\label{rank three infinite nonnegative rank}
Let $X$ be either $C[0,2\pi]$ or $L^p[0,2\pi]$ for some $1\leq p\leq \infty$.
The nonnegative rank of the operator $\mathscr S\colon X\to X$ is infinite.
\end{theorem}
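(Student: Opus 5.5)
We argue by contradiction: assume $\rank^+(\mathscr S)=N<\infty$ and turn a positive factorization of $\mathscr S$ into a nonnegative separable decomposition of the kernel $1+\cos(s-t)$, which Lemma \ref{no_finite_sum_rep} forbids.

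\emph{Step 1: positive factorization.} Since $X$ is a Banach lattice, it is completely metrizable with a closed generating cone. Corollary \ref{tensor_representation_continuous} therefore gives nonnegative functions $a_1,\ldots,a_N\in X_+$ and positive continuous functionals $\psi_1,\ldots,\psi_N\in X^*_+$ such that
$$\mathscr S=\sum_{k=1}^N a_k\otimes \psi_k .$$

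\emph{Step 2: represent the functionals by densities.} We need $\psi_k(f)=\int_0^{2\pi} f(t)b_k(t)\,dt$ with $0\le b_k$ in a suitable $L^q$. The dual $X^*$ need not consist of such functionals (for $X=C[0,2\pi]$ or $L^\infty$), so we first restrict to the subspace where this holds.

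We may assume $a_1,\ldots,a_N$ are linearly independent: if not, rewriting them in terms of a basis might destroy positivity. Instead, we use Theorem \ref{yudin rank+=rank}(i) in reverse, via the proof of Lemma \ref{tensor_representation}.

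Each $a_k$ with $\psi_k\neq 0$ lies in $\ran\mathscr S$. Indeed, pick $f\ge0$ with $\psi_k(f)>0$; then $\mathscr S f\ge \psi_k(f)a_k\ge 0$. Since $\psi_j(f)\ge0$ for all $j$, this only shows domination, not membership. The robust route is to test against continuous functions instead.

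\emph{Step 3: recover the kernel identity.} We restrict everything to $C[0,2\pi]\subseteq X$, a dense sublattice for $p<\infty$ and a subspace otherwise. On $C[0,2\pi]$, each $\psi_k$ is a positive functional, hence by Riesz representation a positive Borel measure $\mu_k$. For every continuous $f$,
$$\int f(t)(1+\cos(s-t))\,dt=\sum_k a_k(s)\int f\,d\mu_k .$$

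Take $f\ge0$ supported on a small interval $I$. If $\mu_k$ had a singular part $\nu_k$ concentrated on a Lebesgue null set $K$, choose continuous $0\le f_n\le 1$ equal to $1$ on a compact subset of $K$ of positive $\nu_k$-measure, with $\int f_n\,dt\to0$. The left side tends to $0$ uniformly in $s$, while the right side is at least $a_k(s)\nu_k(\cdot)$. Hence $a_k\nu_k=0$, so either $\nu_k=0$ or $a_k=0$, and we may drop the terms with $a_k=0$.

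Thus $d\mu_k=b_k\,dt$ with $0\le b_k\in L^1$. Varying $f$ over continuous functions gives
$$1+\cos(s-t)=\sum_k a_k(s)b_k(t)\quad\text{a.e.}$$
(apply Fubini with a countable dense family of $f$). Moreover $b_k\le c\,(1+\cos(\cdot-s_0))$ near points where $a_k>0$, which gives boundedness, so $b_k\in L^\infty\subseteq Y$ for any of the admissible choices of $Y$.

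\emph{Step 4: conclude.} Lemma \ref{no_finite_sum_rep} (with $Y=L^\infty$, or whichever space contains the $b_k$) now yields a contradiction. The main obstacle is Step 3, excluding singular parts of the $\psi_k$ for $X=C[0,2\pi]$ or $L^\infty$. The domination argument above is what we would try first; any membership issues for $b_k$ only require that the $b_k$ lie in some admissible space, which the $L^\infty$ bound provides.
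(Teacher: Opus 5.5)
Your argument is correct once Step 2 is discarded, and its skeleton matches the paper's. You factor $\mathscr S$ by Corollary \ref{tensor_representation_continuous}, show the functionals act by nonnegative $L^1$ densities, deduce the kernel identity, and invoke Lemma \ref{no_finite_sum_rep}. The difference is in the middle step.

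The paper treats the three spaces separately:
\begin{itemize}
\item For $C[0,2\pi]$ it applies the Riesz representation and the uniqueness of the Lebesgue decomposition of $(1+\cos(s-t))\,dt=\sum_k a_k(s)\,d\mu_k$ for each fixed $s$. This is essentially your singular-part argument.
\item For $L^p$ with $p<\infty$ it uses $L^q$-duality and uniqueness of kernels of integral operators.
\item For $L^\infty$ it uses the Yosida--Hewitt representation of $(L^\infty)^*$, order continuity of $\mathscr S$, and the domination $0\le a_k\otimes\varphi_k\le\mathscr S$. These give countable additivity, after which Radon--Nikod\'ym applies.
\end{itemize}
You instead restrict every $\psi_k$ to $C[0,2\pi]\subseteq X$, where it is a positive Radon measure. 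You kill its singular part by testing $\mathscr S f_n\ge \psi_k(f_n)\,a_k$ on continuous $0\le f_n\le 1$ that equal $1$ on a compact Lebesgue-null set of positive $\nu_k$-measure and satisfy $\int f_n\,dt\to 0$. You then obtain the kernel identity from a countable dense family of continuous test functions.

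Your route is uniform in $X$ and avoids all the $L^\infty$ machinery, because a kernel identity only needs to be checked against a separating family of test functions. What it gives up is information about the functionals themselves. For $X=L^\infty$ you identify only $\psi_k|_{C[0,2\pi]}$, whereas the paper shows each $\varphi_k$ is given by an $L^1$ density on all of $L^\infty$. That extra information is not needed for the contradiction.

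Two cleanup points:
\begin{itemize}
\item Delete Step 2. Its assertion that $a_k\in\ran\mathscr S$ whenever $\psi_k\neq 0$ is false in general: for linearly independent $y_1,y_2$, the operator $y_1\otimes\varphi+y_2\otimes\varphi$ has range spanned by $y_1+y_2$. Nothing later uses it.
\item The inclusion $L^\infty\subseteq Y$ fails for $Y=C[0,2\pi]$. Simply apply Lemma \ref{no_finite_sum_rep} with $Y=L^1[0,2\pi]$, which already contains the $b_k$. The boundedness remark then becomes unnecessary; stated correctly, it reads $b_k(t)\le (1+\cos(s_0-t))/a_k(s_0)$ for a.e.\ $t$, for a suitable $s_0$ with $a_k(s_0)>0$.
\end{itemize}
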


\begin{proof}
Suppose, for the sake of contradiction, that the nonnegative rank of $\mathscr S$ is finite. 
By Corollary \ref{tensor_representation_continuous}, $\mathscr S$ can be written as a finite sum
\[
\mathscr S =\sum_{k=1}^N a_k\otimes\varphi_k,
\]
where for each $1\leq k\leq N$ the vector $a_k\in X$ satisfies $a_k\ge0$ in $X$ and $\varphi_k\in X^*$ is a positive bounded linear functionals on $X$. Without loss of generality we may assume that none of the functions $a_k$ is zero almost everywhere for each $1\leq k\leq N$.
We will prove that there exist nonnegative functions $b_1,\ldots,b_N$ all belonging to the same space $L^q[0,2\pi]$ for some $1\leq q\leq \infty$ (depending on $X$) such that
$$1+\cos(s-t)=\sum_{k=1}^Na_k(s)b_k(t)$$
holds for almost every $(s,t)\in [0,2\pi]^2$. This, however will be in contradiction with Lemma \ref{no_finite_sum_rep} and the proof will be finished. 

We first consider the case $X=C[0,2\pi]$. By the Riesz representation theorem, each $\varphi_k$ has the form
\[
\varphi_k(f)=\int_{0}^{2\pi}f(t)\,d\mu_k(t)
\]
for some finite positive Radon measure $\mu_k$ on $[0,2\pi]$.
Hence,
\[
\int_{0}^{2\pi}(1+\cos(s-t))f(t)\,dt=(\mathscr S f)(s)=\sum_{k=1}^N a_k(s)\int_0^{2\pi} f(t)\,d\mu_k(t).
\]
Fix $s\in [0,2\pi]$. By the uniqueness part of the Riesz representation theorem, for each $s\in [0,2\pi]$ we have
\begin{equation}\label{eq:measure_identity}
(1+\cos(s-t))\,dt
   = \sum_{k=1}^N a_k(s)\,d\mu_k(t).
\end{equation}

By the Lebesgue–Radon–Nikodým decomposition, for each $1\leq k\leq N$ there exist a nonnegative function
$b_k\in L^{1}[0,2\pi]$ and a positive measure $\lambda_k$ singular with respect to Lebesgue measure such that
\[
d\mu_k(t)=b_k(t)\,dt + d\lambda_k(t).
\]
Substituting into \eqref{eq:measure_identity} gives
\[
(1+\cos(s-t))\,dt
= \sum_{k=1}^N a_k(s)b_k(t)\,dt
  + \sum_{k=1}^N a_k(s)\,d\lambda_k(t).
\]
Since for each $s\in [0,2\pi]$ the Lebesgue measure and the measure
\(
\sum_{k=1}^N a_k(s)\lambda_k
\)
are mutually singular, for each $s\in [0,2\pi]$ it follows that
\[
\sum_{k=1}^N a_k(s)\lambda_k = 0.
\]
Therefore, \eqref{eq:measure_identity} yields that for every $s\in [0,2\pi]$ we have
\[
1+\cos(s-t)=\sum_{k=1}^N a_k(s)b_k(t)
\]
for almost every $t\in [0,2\pi]$. 

Now let us consider the case $X=L^p[0,2\pi]$ for some $1\leq p<\infty$. By the representation theorem of bounded linear functionals on $L^p$-spaces over $\sigma$-finite measure spaces (see \cite[Theorem 6.16]{Rudin}), for each $1\leq k\leq N$ there exists a function $b_k\in L^q[0,2\pi]$ where $q$ is the conjugated exponent of $p$ such that 
$$\varphi_k(f)=\int_0^{2\pi} f(t)b_k(t)\,dt$$ 
holds for each $f\in L^p[0,2\pi]$. Since $\varphi_k$ is positive, function $b_k$ is nonnegative almost everywhere. This yields that for each $f\in L^p[0,2\pi]$ we have
\begin{align*}
\int_0^{2\pi}f(t)(1+\cos(s-t))\,dt&=(\mathscr Sf)(s)=\sum_{k=1}^Na_k(s)\int_0^{2\pi}f(t)b_k(t)\,dt 
\end{align*}
for almost every $s\in [0,2\pi]$. 
Therefore, the operator $\mathscr S\colon L^p[0,2\pi]\to L^p[0,2\pi]$ is an integral operator, represented with integral kernels $(s,t)\mapsto 1+\cos(s-t)$ and $(s,t)\mapsto \sum_{k=1}^n a_k(s)b_k(t)$. Since $L^p[0,2\pi]$ is order dense in $L^0[0,2\pi]$, by \cite[Theorem 5.5]{AA02} we have  
$$1+\cos(s-t)=\sum_{k=1}^Na_k(s)b_k(t)$$ 
for almost every $(s,t)\in [0,2\pi]^2$. 

Lastly, let us consider the case $X=L^\infty[0,2\pi]$. By the representation theorem of the dual space of $L^\infty[0,2\pi]$ (see  e.g. \cite[Theorem 2.3]{YH52}), for each $1\leq k\leq N$ there exists a finitely additive positive Borel measure $\mu_k$ on $[0,2\pi]$ which is absolutely continuous with respect to the Lebesgue measure such that for each $f\in L^\infty[0,2\pi]$ we have
$$\varphi_k(f)=\int_0^{2\pi}f(t)\,d\mu_k(t).$$ 
We claim that $\mu_k$ is countably additive. To this end, observe first that the operator $\mathscr S$ is by \cite[Corollary 5.4]{AA02} order continuous. Since we have $0\leq a_k\otimes \varphi_k\leq \mathscr S$ and $a_k$ is not zero almost everywhere, the positive linear functional $\varphi_k$ is also order continuous. If $(E_n)_{n\in\mathbb N}$ is an increasing sequence of measurable subsets of $[0,2\pi]$, then $\chi_{E_n}\uparrow \chi_E$ where $E=\cup_{n=1}^\infty E_n$. Since $\varphi_k$ is order continuous, we have $\varphi_k(\chi_{E_n})\uparrow \varphi_k(\chi_E)$ yielding
$\mu_k(E_n)\uparrow \mu_k(E)$. This shows countable additivity of $\mu_k$.  

Once again, by the Radon–Nikodým theorem, there exists a nonnegative function $b_k\in L^1[0,2\pi]$ such that $d\lambda_k=b_k\,dt$. As in the previous case, this yields that for each $f\in L^\infty[0,2\pi]$ we have
\begin{align*}
\int_0^{2\pi}f(t)(1+\cos(s-t))\,dt&=(\mathscr Sf)(s)=\sum_{k=1}^Na_k(s)\int_0^{2\pi}f(t)b_k(t)\,dt 
\end{align*}
for almost every $s\in [0,2\pi]$, and so by a similar reasoning as above we have 
$$1+\cos(s-t)=\sum_{k=1}^Na_k(s)b_k(t)$$ 
for almost every $(s,t)\in [0,2\pi]^2$. 

To finish the proof, we apply Lemma \ref{no_finite_sum_rep} in all three cases.
\end{proof}

\section{On the cones associated with the operator $\mathscr S$}\label{sec:S-range cone}

Recall that the operator $\mathscr S\colon C[0,2\pi]\to C[0,2\pi]$ is defined by
\[
(\mathscr Sf)(x)=\int_{0}^{2\pi} f(t)\bigl(1+\cos(x-t)\bigr)\,dt,
\]
for $f\in C[0,2\pi]$ and $x\in[0,2\pi]$. 
By Theorem \ref{rank three infinite nonnegative rank} we know that $\mathscr S$ is a positive operator of rank three with infinite nonnegative rank. In this section we study $\mathscr S$ in more detail. In particular, we determine its range cone $C_{\mathscr S}$ and the cone $\mathscr S\bigl(C[0,2\pi]_+\bigr)$.

Recall also the definitions of bounded linear functionals $\varphi_1,\varphi_2,\varphi_3\colon C[0,2\pi]\to\mathbb R$:
\[
\varphi_1(f)=\int_{0}^{2\pi} f(t)\cos t\,dt,\qquad
\varphi_2(f)=\int_{0}^{2\pi} f(t)\sin t\,dt,\qquad
\varphi_3(f)=\int_{0}^{2\pi} f(t)\,dt.
\]
Next, let $u_1,u_2,u_3\in C[0,2\pi]$ be given by $u_1(t)=\cos t$, $u_2(t)=\sin t$, and $u_3(t)=1$. Then $\mathscr S$ admits the representation
\[
\mathscr S = u_1\otimes\varphi_1 + u_2\otimes\varphi_2 + u_3\otimes\varphi_3.
\]
 
\begin{proposition}\label{range cone of F}
For a function $f\in C[0,2\pi]$ we have $\mathscr Sf\geq 0$ if and only if  
\begin{equation}\label{Tf>=0} 
\varphi_3(f) \geq \sqrt{\varphi_1(f)^2+\varphi_2(f)^2}.
\end{equation}
\end{proposition}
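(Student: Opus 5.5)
The plan is to evaluate $\mathscr Sf$ pointwise using the representation \eqref{tensor representation od S}. Write $\alpha=\varphi_1(f)$, $\beta=\varphi_2(f)$, $\gamma=\varphi_3(f)$. Then for every $s\in[0,2\pi]$,
\[
(\mathscr Sf)(s)=\alpha\cos s+\beta\sin s+\gamma .
\]
The condition $\mathscr Sf\ge 0$ in $C[0,2\pi]$ therefore means that the trigonometric polynomial $s\mapsto \alpha\cos s+\beta\sin s+\gamma$ is nonnegative on all of $[0,2\pi]$. Equivalently, its minimum over $[0,2\pi]$ must be nonnegative.

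The key step is to compute $\min_{s\in[0,2\pi]}(\alpha\cos s+\beta\sin s)=-\sqrt{\alpha^2+\beta^2}$.

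\emph{Upper bound on the sum.} By the Cauchy--Schwarz inequality in $\mathbb R^2$ applied to $(\alpha,\beta)$ and $(\cos s,\sin s)$, we get $\alpha\cos s+\beta\sin s\ge -\sqrt{\alpha^2+\beta^2}$ for all $s$. Hence \eqref{Tf>=0}, i.e.\ $\gamma\ge\sqrt{\alpha^2+\beta^2}$, implies $(\mathscr Sf)(s)\ge 0$ for every $s$.

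\emph{Attainment of the minimum.} For the converse, first suppose $(\alpha,\beta)\neq(0,0)$. Choose $s_0\in[0,2\pi]$ with
\[
(\cos s_0,\sin s_0)=-\frac{(\alpha,\beta)}{\sqrt{\alpha^2+\beta^2}} .
\]
Such an $s_0$ exists because the map $s\mapsto(\cos s,\sin s)$ sends $[0,2\pi]$ onto the unit circle. Then
\[
0\le(\mathscr Sf)(s_0)=\gamma-\sqrt{\alpha^2+\beta^2}.
\]
If instead $\alpha=\beta=0$, then $\mathscr Sf\equiv\gamma$, and nonnegativity gives $\gamma\ge 0=\sqrt{\alpha^2+\beta^2}$.

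No step here is a real obstacle. The one point needing care is that positivity in $C[0,2\pi]$ is pointwise, so evaluating at the single point $s_0$ is legitimate. The same argument would hold in the $L^p$ setting as well, because the function $\mathscr Sf$ is continuous and so a.e.\ nonnegativity is the same as everywhere nonnegativity.
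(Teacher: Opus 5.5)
Your proof is correct and follows the paper's route: you express $(\mathscr Sf)(s)=\varphi_1(f)\cos s+\varphi_2(f)\sin s+\varphi_3(f)$ and use that its minimum over $[0,2\pi]$ is $\varphi_3(f)-\sqrt{\varphi_1(f)^2+\varphi_2(f)^2}$, which the paper calls an easy calculus exercise and which you verify explicitly via Cauchy--Schwarz and attainment at $s_0$. The only blemish is cosmetic: the paragraph headed ``Upper bound on the sum'' actually proves a lower bound, and that lower bound is the one you need.
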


\begin{proof}
    An easy calculus exercise shows that the minimum of the function $x\mapsto a \cos x+b\sin x + c$ on the interval $[0,2\pi]$ is $c-\sqrt{a^2+b^2}$. Since
    \[(\mathscr Sf)(x)= \varphi_1(f) \cos x + \varphi_2(f)\sin x + \varphi_3(f),\] 
    the function $\mathscr Sf$ is nonnegative on $[0,2\pi]$ if and only if 
     \[\varphi_3(f)-\sqrt{\varphi_1(f)^2+\varphi_2(f)^2} \geq 0\] 
     which proves \eqref{Tf>=0}.
\end{proof}

We start by determining the range cone $C_{\mathscr S}$ of $\mathscr S$.
Let 
$$C=\{(a,b,c)\in \mathbb R^3:\; c\geq \sqrt{a^2+b^2}\}$$ 
be the standard ice-cream cone in $\mathbb R^3$.

\begin{proposition}\label{cone isomorphism}
The mapping $\Phi\colon (\mathbb R^3, C)\to (\ran \mathscr S,C_{\mathscr S})$ given by 
$$\Phi(a,b,c)=au_1+bu_2+cu_3$$ is an order and topological isomorphism. 
\end{proposition}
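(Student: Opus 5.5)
The plan is to verify four things in turn: $\Phi$ is a linear bijection onto $\ran\mathscr S$, it is a homeomorphism, $\Phi(C)\subseteq C_{\mathscr S}$, and $\Phi^{-1}(C_{\mathscr S})\subseteq C$. Together these say that $\Phi$ is an order and topological isomorphism.

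\emph{Linear bijection and topology.} Linearity of $\Phi$ is clear. By Lemma \ref{lastnosti S}, $\ran\mathscr S=\Span\{u_1,u_2,u_3\}$ and $\rank(\mathscr S)=3$, so $u_1,u_2,u_3$ are linearly independent. Hence $\Phi$ is a linear bijection of $\mathbb R^3$ onto $\ran\mathscr S$. Both spaces are finite-dimensional Hausdorff topological vector spaces (with the sup norm on $\ran\mathscr S$), so every linear map between them is continuous. Therefore $\Phi$ and $\Phi^{-1}$ are continuous, and $\Phi$ is a topological isomorphism.

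\emph{Inclusion $\Phi(C)\subseteq C_{\mathscr S}$.} Take $(a,b,c)\in C$. The function $\Phi(a,b,c)(x)=a\cos x+b\sin x+c$ lies in $\ran\mathscr S$. By the calculus fact used in the proof of Proposition \ref{range cone of F}, its minimum on $[0,2\pi]$ is $c-\sqrt{a^2+b^2}\ge 0$. So $\Phi(a,b,c)\ge 0$, and $\Phi(a,b,c)\in C_{\mathscr S}$.

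\emph{Inclusion $\Phi^{-1}(C_{\mathscr S})\subseteq C$.} Take $g\in C_{\mathscr S}$ and write $g=au_1+bu_2+cu_3$. Since $g\ge 0$, the same minimum computation gives $c\ge\sqrt{a^2+b^2}$, i.e.\ $(a,b,c)\in C$. Thus $\Phi(C)=C_{\mathscr S}$, and $\Phi$ is a bipositive linear bijection.

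Alternatively, this could be routed through Proposition \ref{range cone of F} by writing $g=\mathscr S f$ with $f=\tfrac{a}{\pi}u_1+\tfrac{b}{\pi}u_2+\tfrac{c}{2\pi}u_3$, which works because $\mathscr S u_j$ are multiples of $u_j$. The direct minimum computation is simpler, so I would use that.

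No step is a genuine obstacle. The only points needing care are the linear independence of $u_1,u_2,u_3$, which comes from Lemma \ref{lastnosti S}, and the explicit minimum $c-\sqrt{a^2+b^2}$. That minimum holds because $a\cos x+b\sin x=\sqrt{a^2+b^2}\cos(x-\theta)$ for some $\theta$, and $\cos(x-\theta)$ attains the value $-1$ on the full period $[0,2\pi]$.
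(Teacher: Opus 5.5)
Your proposal is correct and follows essentially the same route as the paper. Bijectivity comes from $\ran\mathscr S=\Span\{u_1,u_2,u_3\}$ with $\rank(\mathscr S)=3$, continuity is automatic in finite dimensions, and the order part rests on the minimum $c-\sqrt{a^2+b^2}$ of $a\cos x+b\sin x+c$. You are slightly more explicit than the paper about injectivity, which the paper leaves implicit when it passes from ``linear surjection'' to ``isomorphism.''
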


\begin{proof}
    Since $\ran \mathscr S=\Span\{u_1,u_2,u_3\}$,  $\Phi$ is a well-defined linear surjection. Therefore,  it is an isomorphism.  Moreover, since $\Phi$ maps between finite-dimensional Hausdorff spaces, it is a topological isomorphism. 

    To prove that $\Phi$ is an order isomorphism, note that $(a,b,c)\in C$ if and only if $c\geq \sqrt{a^2+b^2}$ that is in fact equivalent to $au_1+bu_2+cu_3\in C[0,2\pi]_+$. 
\end{proof}
 
Next we will provide an explicit description of the cone $\mathscr S\big(C[0,2\pi]_+\big)$. It turns out that its description is significantly harder to determine compared to that one of the range cone $C_{\mathscr S}$ of $\mathscr S$. We will need the \emph{Poisson kernel} $P_r$ ($0 < r < 1$) given by
\[P_r(t)=\frac{1-r^2}{1-2r\cos t+r^2}\]
for every $t\in [0,2\pi]$. It is well-known (see e.g. \cite{Rudin}) that $P_r$ is a nonnegative continuous function satisfying 
\begin{equation}\label{poisson_vrsta}
\int_0^{2\pi}P_r(t)\,dt=2\pi \quad \textrm{and}\quad P_r(t)=\sum_{n=-\infty}^\infty r^{|n|}e^{int}=1+2\sum_{n=1}^\infty r^n\cos nt.
\end{equation}
 Since $0<r<1$, the trigonometric series above converges uniformly to $P_r$ on $[0,2\pi]$. Therefore, the standard exchange of summation and integration gives us
 \begin{equation}\label{poissonove formule}
 \int_0^{2\pi}P_r(t)\cos t\,dt = 2\pi r \qquad \textrm{and}\qquad \int_0^{2\pi}P_r(t)\sin t\,dt = 0. 
 \end{equation}

\begin{theorem}\label{image of cone}
We have
 \begin{equation}\label{Tf>0} 
 \Phi^{-1}\big(\mathscr S(C[0,2\pi]_+)\big)=\big\{(a,b,c)\in \mathbb R^3:\; c> \sqrt{a^2+b^2}\big\}\cup\{(0,0,0)\}.
 \end{equation}
\end{theorem}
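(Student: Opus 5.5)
We prove the two inclusions separately. Throughout, for $f\in C[0,2\pi]$ we have $\mathscr Sf=\Phi(\varphi_1(f),\varphi_2(f),\varphi_3(f))$, so the claim concerns the set of triples $(\varphi_1(f),\varphi_2(f),\varphi_3(f))$ with $f\geq 0$.

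\emph{Inclusion ``$\subseteq$''.} Let $f\in C[0,2\pi]_+$ and put $a=\varphi_1(f)$, $b=\varphi_2(f)$, $c=\varphi_3(f)$. If $f=0$ we get $(0,0,0)$. Otherwise $c=\int_0^{2\pi}f>0$. If $(a,b)=(0,0)$, then $c>0=\sqrt{a^2+b^2}$ and we are done. If $(a,b)\neq(0,0)$, write $(a,b)=\rho(\cos\theta,\sin\theta)$ with $\rho>0$. Then
\[
\sqrt{a^2+b^2}=\int_0^{2\pi}f(t)\cos(t-\theta)\,dt\leq\int_0^{2\pi}f(t)\,dt=c .
\]
Equality would force $f(t)(1-\cos(t-\theta))=0$ for all $t$. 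The function $f(t)(1-\cos(t-\theta))$ is continuous and nonnegative, and $1-\cos(t-\theta)$ vanishes at most at two points of $[0,2\pi]$. Hence equality would give $f=0$ outside a finite set, so $f=0$ by continuity, a contradiction. Therefore the inequality is strict.

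\emph{Inclusion ``$\supseteq$''.} The origin is attained by $f=0$. Let $c>\sqrt{a^2+b^2}$, so $c>0$. Write $a+ib=\rho e^{i\theta}$ with $\rho\geq 0$, and set $r:=\rho/c\in[0,1)$. Consider
\[
f(t)=\frac{c}{2\pi}P_r(t-\theta),
\]
with the convention $P_0\equiv 1$. By periodicity of $P_r$ and \eqref{poisson_vrsta}, $f$ is a nonnegative continuous function with $\varphi_3(f)=c$. Shifting the variable and using \eqref{poissonove formule}, we get
\[
\int_0^{2\pi}P_r(t-\theta)\cos t\,dt=2\pi r\cos\theta,\qquad \int_0^{2\pi}P_r(t-\theta)\sin t\,dt=2\pi r\sin\theta .
\]
To see this, expand $\cos t=\cos((t-\theta)+\theta)$ and $\sin t=\sin((t-\theta)+\theta)$, and use that $\int_0^{2\pi} P_r(u)\sin u\,du=0$ and $\int_0^{2\pi} P_r(u)\cos u\,du=2\pi r$. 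It follows that $\varphi_1(f)=c r\cos\theta=a$ and $\varphi_2(f)=cr\sin\theta=b$. Hence $\mathscr S f=\Phi(a,b,c)$.

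The only delicate point is the strictness argument in the first inclusion. It uses continuity of $f$ essentially, which is consistent with the cone being only dense in the range cone and not equal to it. The translation of the Poisson kernel also requires a little care: $P_r(t-\theta)$ should be read as the $2\pi$-periodic extension. This is harmless, since $P_r$ is given by a cosine series and is therefore defined and continuous on all of $\mathbb R$.
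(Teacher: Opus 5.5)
Your proof is correct. It follows the paper's overall strategy: a strict triangle inequality for ``$\subseteq$'' and a Poisson-kernel construction for ``$\supseteq$''. You carry out both halves more economically.

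For ``$\subseteq$'', the paper writes $\varphi_1(f)+i\varphi_2(f)=\int_0^{2\pi}f(t)e^{it}\,dt$. It then invokes the equality case of $\bigl|\int g\bigr|\le\int|g|$ (Rudin, Theorem 1.39) to get $\alpha f(t)e^{it}=f(t)$, and finishes with continuity. Your rotation by $\theta$ reduces the same equality case to $\int_0^{2\pi}f(t)(1-\cos(t-\theta))\,dt=0$ with a continuous nonnegative integrand. This is self-contained and needs no external citation.

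For ``$\supseteq$'', the paper takes $r\in(R/c,1)$ and an angle $\alpha$ with $\cos\alpha=R/(cr)$. It sets $f=\frac{1}{2\pi}\int_0^{2\pi}P_r(t-s)\,d\mu(s)$ for the two-point measure $\mu=\frac c2\gamma_{\theta-\alpha}+\frac c2\gamma_{\theta+\alpha}$. This requires Fubini and a separate uniform-continuity argument.

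Your function $\frac{c}{2\pi}P_{\rho/c}(t-\theta)$ is exactly the degenerate case $r=\rho/c$, $\alpha=0$ of that scheme, where the two point masses merge into $c\,\gamma_\theta$. With this choice, continuity and positivity are immediate, and the moment computation is a single change of variables.

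The only price is the case $\rho=0$, where you need $P_0\equiv 1$. The closed formula gives this anyway. The paper's open interval for $r$ instead keeps $0<r<1$ uniformly. The paper's measure-smoothing formulation is more flexible, since other positive measures with suitable moments also produce preimages. That flexibility is not needed for this statement.
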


\begin{proof}
    Since the operator $\mathscr S$ is positive by Lemma \ref{lastnosti S},  $\mathscr S(C[0,2\pi]_+)$ is contained in the range cone of $\mathscr S$. By Proposition \ref{range cone of F}, each function $f\in C[0,2\pi]_+$ satisfies
    \eqref{Tf>=0}. Suppose there were some nonzero nonnegative function $f\in C[0,2\pi]_+$ satisfying 
    \begin{equation}\label{enacaj ne velja}
        \varphi_1(f)^2+\varphi_2(f)^2 = \varphi_3(f)^2.
    \end{equation} 
    The identity
    \[\varphi_1(f)+i\varphi_2(f)=\int_0^{2\pi}f(t)e^{it}\,dt\]
    immediately yields
    \[
    \varphi_3(f)^2 = \varphi_1(f)^2+\varphi_2(f)^2=\left|\int_0^{2\pi}f(t)e^{it}\,dt\right|^2\leq \]
    \[ \le \left(\int_0^{2\pi}|f(t)e^{it}|\,dt\right)^2=\left(\int_0^{2\pi} f(t) \,dt\right)^2 = \varphi_3(f)^2, \]
    from where we conclude
    \[\left|\int_0^{2\pi}f(t)e^{it}\,dt\right|=\int_0^{2\pi}|f(t)e^{it}|\,dt=\int_0^{2\pi} f(t) \,dt.\]
    An application of \cite[Theorem 1.39]{Rudin} provides a constant $\alpha$ such that for almost every $t\in [0,2\pi]$ we have
    $\alpha f(t)e^{it}=f(t)$. Since $f$ is continuous, the latter equality holds for each $t\in [0,2\pi]$. This would imply that $t\mapsto e^{it}$ is constant on some
    nonempty open set as $f$ is nonzero, which is impossible. This contradiction shows that in \eqref{Tf>=0} we have strict inequality which proves the containment ``$\subseteq$''.

    To prove the containment ``$\supseteq$'' take any triplet $(a,b,c)\in \mathbb R^3$ with $c>\sqrt{a^2+b^2}$. We need to find a function $f\in C[0,2\pi]_+$ such that $\Phi(a,b,c)=\mathscr Sf$, i.e., 
     \[a=\int_0^{2\pi}f(t)\cos t\,dt, \quad b=\int_0^{2\pi}f(t)\sin t\,dt \quad \textrm{and} \quad c=\int_0^{2\pi}f(t)\,dt.\]
     To this end, let us write $a+ib=Re^{i\theta}$ with $R=\sqrt{a^2+b^2}$ and choose $r\in (\tfrac Rc,1)$.
     Pick $\alpha$ such that $\cos \alpha=\frac{R}{cr}$ and consider the positive measure 
     \[\mu:=\frac{c}{2}\gamma_{\theta-\alpha}+\frac{c}{2}\gamma_{\theta+\alpha} , \]
     where $\gamma_s$ is meant to be the Dirac measure $\delta_{s'}$ where $s'$ is the unique point in $[0,2\pi]$ such that $s'$ and $s$ differ by an integer multiple of $2\pi$. 
     
     We claim that the function $f\colon [0,2\pi]\to \mathbb R$ defined as 
     \[f(t)=\frac{1}{2\pi}\int_0^{2\pi}P_r(t-s)\,d\mu(s)\]
     is the required function. First, observe that $f$ is nonnegative since $P_r$ is nonnegative and $\mu$ is a positive measure. To prove that $f$ is continuous, we can write
     \begin{align*}
         |f(t)-f(t_0)| &= \frac{1}{2\pi}\big|\int_0^{2\pi}(P_r(t-s)-P_r(t_0-s))\,d\mu(s)\big|\\
         & \leq \frac{1}{2\pi}\int_0^{2\pi}|(P_r(t-s)-P_r(t_0-s))|\,d\mu(s) \\
         &\leq \frac{1}{2\pi}\sup_{0\leq s\leq 2\pi}|(P_r(t-s)-P_r(t_0-s))| \cdot \mu([0,2\pi])
     \end{align*}
     and apply uniform continuity of $P_r$ to see that $f(t)\to f(t_0)$ as $t \to t_0$. 

     To finish the proof we need to calculate the integrals. First, by using the Fubini theorem we have
     \begin{align*}
     \int_0^{2\pi}f(t)\,dt & =\frac{1}{2\pi}\int_0^{2\pi}\bigg(\int_0^{2\pi} P_r(t-s)\,d\mu(s)\bigg)\,dt\\
     &=\frac{1}{2\pi}\int_0^{2\pi}\bigg(\int_0^{2\pi} P_r(t-s)\,dt\bigg)\,d\mu(s)=\mu([0,2\pi])=c.
     \end{align*}
     Similarly, but with more work we obtain 
     \begin{align*}
         \int_{0}^{2\pi} f(t)\cos t\,dt &= \frac{1}{2 \pi}\int_0^{2\pi}\bigg(\int_0^{2\pi} P_r(t-s) \cos t\,d\mu(s)\bigg)\,dt\\
         &=\frac{1}{2 \pi}\int_0^{2\pi}\bigg(\int_0^{2\pi} P_r(t-s)\cos t\,dt\bigg)\,d\mu(s)\\
         &=\frac{1}{2 \pi }\int_0^{2\pi}\bigg(\int_0^{2\pi} P_r(u)\cos(u+s)\,du\bigg)\,d\mu(s).
     \end{align*}
     Now an application of the cosine addition formula and \eqref{poissonove formule} yield 
    \begin{align*}
    \int_0^{2\pi}f(t)\cos t\,dt &= r\int_0^{2\pi}\cos s\,d\mu(s)=\frac{rc}{2}\big(\cos(\theta-\alpha)+\cos(\theta+\alpha)\big) \\
    &=rc\cos\theta\cos\alpha=R\cos\theta=a.
    \end{align*}
    The proof of the equality $b=\int_0^{2\pi}f(t)\sin t\,dt$ is very similar, so we omit it. 
\end{proof}

Proposition \ref{cone isomorphism} and Theorem \ref{image of cone} immediately yield the following corollary. 

\begin{corollary}
   The cone $\mathscr S (C[0,2\pi]_+)$ is dense in the range cone $C_{\mathscr S}$ of $\mathscr S$, and it is not 
   finitely generated in $C[0,2\pi]$. 
\end{corollary}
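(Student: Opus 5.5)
The plan is to transport everything to $\mathbb R^3$ via the map $\Phi$ from Proposition \ref{cone isomorphism}. That proposition says $\Phi$ is both an order isomorphism from $(\mathbb R^3,C)$ onto $(\ran\mathscr S,C_{\mathscr S})$ and a topological isomorphism. Since $\ran\mathscr S$ is a finite-dimensional subspace of $C[0,2\pi]$, its relative norm topology is the unique Hausdorff vector topology on it. Hence density questions in $\ran\mathscr S$ for the sup-norm can be checked in $\mathbb R^3$ with the Euclidean topology. Likewise, finite generation of a cone is a purely algebraic property, and it is preserved by the linear bijection $\Phi$.

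\textbf{Density.} By Theorem \ref{image of cone}, $\Phi^{-1}(\mathscr S(C[0,2\pi]_+))$ is
\[
D=\{(a,b,c):\ c>\sqrt{a^2+b^2}\}\cup\{0\}.
\]
Take any $(a,b,c)\in C$. The points $(a,b,c+1/n)$ lie in $D$, because $c+1/n>c\ge\sqrt{a^2+b^2}$, and they converge to $(a,b,c)$. So $D$ is dense in $C$. Applying the homeomorphism $\Phi$ shows that $\mathscr S(C[0,2\pi]_+)$ is norm dense in $\Phi(C)=C_{\mathscr S}$. Concretely, $\mathscr Sf+\tfrac1n u_3\to\mathscr Sf$ uniformly.

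\textbf{Not finitely generated.} Suppose, for contradiction, that $\mathscr S(C[0,2\pi]_+)=\cone\{g_1,\dots,g_m\}$. Then $D=\cone\{v_1,\dots,v_m\}$ with $v_j=\Phi^{-1}(g_j)$. A finitely generated cone in $\mathbb R^3$ is closed; this is standard, e.g.\ via Carathéodory's theorem together with compactness of the coefficient simplices, or by citing \cite{AT}. On the other hand, $D$ is not closed: the boundary point $(1,0,1)$ belongs to $C\setminus D$ but is a limit of the points $(1,0,1+1/n)\in D$. This contradiction shows the cone is not finitely generated.

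An alternative argument avoids closedness. The generators $v_j\ne0$ satisfy $c_j>\sqrt{a_j^2+b_j^2}$, so the finitely many values $\sqrt{a_j^2+b_j^2}/c_j$ have a maximum $\rho<1$. Every nonzero nonnegative combination of the $v_j$ then still satisfies $\sqrt{a^2+b^2}\le\rho c$, by the triangle inequality. This contradicts the fact that $D$ contains $(\rho',0,1)$ for some $\rho'\in(\rho,1)$.

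The only step that needs care is the claim that finitely generated cones are closed, or equivalently the triangle-inequality estimate. Both are routine, so I do not expect a real obstacle: the corollary follows directly from Proposition \ref{cone isomorphism} and Theorem \ref{image of cone}.
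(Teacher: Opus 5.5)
Your proposal is correct and takes the same route as the paper, which simply says the corollary follows immediately from Proposition~\ref{cone isomorphism} and Theorem~\ref{image of cone}. Your transport via $\Phi$, the perturbation $(a,b,c+1/n)$ for density, and the closedness (or triangle-inequality) argument against finite generation supply exactly the details the paper leaves implicit.
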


\subsection*{Acknowledgements}
The first author was supported by the Slovenian Research and Innovation Agency program P1-0222.
The second author was supported by the Slovenian Research and Innovation Agency program P1-0222 and grant J1-50002.

\subsection*{Data Availability Statement}

No datasets were generated or analysed during the current study.

\subsection*{Conflicts of Interest}

The authors declare that they have no conflicts of interest relevant to the content of this article.

\end{document}